\newtheorem{thm}{Theorem}[section]
\newtheorem{lemma}[thm]{Lemma}
\newtheorem{prop}[thm]{Proposition}
\newtheorem{cor}[thm]{Corollary}
\theoremstyle{definition}
\newtheorem{defi}[thm]{Definition}
\newtheorem{example}[thm]{Example}
\theoremstyle{remark}
\newtheorem{remark}[thm]{Remark}
\numberwithin{equation}{section}
\newcommand{\hh}{{\bf{h}}}
\newcommand{\PP}{\mathbb{P}}      
\newcommand{\OO}{\mathcal{O}}     
\newcommand{\F}{\mathcal{F}}      
\newcommand{\G}{\mathcal{G}}      
\newcommand{\T}{\mathcal{T}}      
\newcommand{\D}{\mathcal{D}}      
\newcommand{\C}{\mathcal{C}}      
\newcommand{\Gammahat}{{\widehat{\Gamma}}}
\newcommand{\Om}{\Omega}
\newcommand{\CC}{\mathbb{C}}       
\newcommand{\I}{\mathbbm{1}}
\newcommand{\Z}{\mathbb{Z}}       
\newcommand{\g}{\mathfrak{g}}
\newcommand{\s}{\mathfrak{sl}}
\newcommand{\K}{\mathcal{K}}
\newcommand{\MM}{\Mod_{G}}
\DeclareMathOperator{\Ind}{Ind}
\DeclareMathOperator{\Hom}{Hom}
\DeclareMathOperator{\RHom}{RHom}
\DeclareMathOperator{\Ext}{Ext}
\DeclareMathOperator{\End}{End}
\DeclareMathOperator{\id}{id}
\DeclareMathOperator{\Path}{Path}
\DeclareMathOperator{\Rep}{Rep}
\DeclareMathOperator{\Coh}{Coh}
\DeclareMathOperator{\QCoh}{QCoh}
\DeclareMathOperator{\Proj}{Proj}
\DeclareMathOperator{\Mod}{Mod}
\begin{document}
\title{Quantum McKay Correspondence and Equivariant Sheaves on $\PP_{q}^{1}$}
\author{A. Kirillov Jr.}
 \address{Department of Mathematics, SUNY at Stony Brook, 
            Stony Brook, NY 11794, USA}
    \email{kirillov@math.sunysb.edu}
    \urladdr{http://www.math.sunysb.edu/\textasciitilde kirillov/} 
\author{J. Thind}
 \address{Department of Mathematics, University of Toronto, 
            Toronto, ON, Canada}
    \email{jthind@math.utoronto.ca}
    \urladdr{http://www.math.utoronto.ca/jthind} 
\maketitle
\begin{abstract}
In this paper, using the quantum McKay correspondence, we construct the ``derived category" of $G$-equivariant sheaves on $\PP^{1}_{q}$ - the quantum projective line at a root of unity. More precisely, we use the representation theory of $U_{q} (\s_{2})$ at root of unity to construct an analogue of the symmetric algebra and the structure sheaf. The analogue of the structure sheaf is, in fact, a complex, and moreover it is a dg-algebra. Our derived category arises via a triangulated category of $G$-equivariant dg-modules for this dg-algebra. We then relate this to representations of the quiver $(\Gamma , \Om)$, where $\Gamma$ is the $A,D,E$ graph associated to $G$ via the quantum McKay correspondence, and $\Om$ is an orientation of $\Gamma$. As a corollary, our category categorifies the corresponding root lattice, and the indecomposable sheaves give the corresponding root system.
\end{abstract}

\section{Introduction}

The McKay correspondence gives a bijection between the finite subgroups of $\text{SU}(2)$, and affine $A,D,E$ Dynkin graphs. There are several geometric statements related to this correspondence. In particular, one can look at $\CC^{2} /G$ and its minimal resolution $\pi: X \to \CC^{2} /G$. In this case, the exceptional fibre $\pi^{-1} (0)$ is a collection of $\PP^{1}$'s whose intersection graph is the corresponding finite $A,D,E$ graph. At the level of categories, Kapranov and Vasserot (\cite{kapvas}) showed there is an equivalence of derived categories $D^{b} (X) \to \D^{b} (\CC^{2} / G)$, and they also showed that this categorified the corresponding root lattice. (I.e. if $G$ corresponds to the Dynkin diagram $\Gamma$, one can realize the associated root lattice as the Grothendieck group of $\D^{b} (\CC^{2} / G)$.)  Bridgeland, King and Reid generalized this to higher dimension, stating the McKay correspondence as a derived equivalence and proving the 3 dimensional case in \cite{bkr}.

Another approach, due to Kirillov Jr. (\cite{kirillov}), was to consider $\overline{G}$-equivariant sheaves on $\PP^{1}$, where $\overline{G}= G / \pm \mathbb{I}$. The derived category of $\overline{G}$-equivariant sheaves on $\PP^{1}$, denoted $\D_{\overline{G}} (\PP^{1})$, again categorifies the corresponding root lattice, and one can realize the roots as the indecomposable sheaves. (In fact, one realizes the real roots as the locally free sheaves, and the imaginary roots as the torsion sheaves.)

There is a quantum version of the McKay correspondence, which gives a bijection between ``finite subgroups" of $U_{q} (\s_{2})$ and finite $A,D,E$ Dynkin graphs. (See Section~\ref{s:qmckay} or \cite{kirillov-ostrik}  for more details.) In this paper we establish a ``quantum" version of the construction in \cite{kirillov}. Namely, we define the derived category of $G$-equivariant sheaves on $\PP^{1}_{q}$ - the quantum projective line, where $q$ is a root of unity.

A natural question is ``What is meant by $\PP^{1}_{q}$?". It should be some sort of non-commutative space, analogous in some way to $\PP^{1}$. There are several approaches to defining non-commutative spaces, but the one we take is categorical. In particular, we define the quantum space by defining its derived category of sheaves. The results of Gabriel (\cite{gabriel}) and Rosenberg (\cite{rosen}) roughly state that a scheme $X$ is the same thing as its category of quasi-coherent sheaves $\QCoh (X)$. Rosenberg and Lunts (\cite{rosen}, \cite{rosen-lunts}) have used this approach to define quantum flag varieties via the representation theory of the corresponding quantum group. The starting point for this perspective is the result of Serre (\cite{serre}), stating that the category of sheaves on $\PP^{n}$ is equivalent to the category of graded $S$-modules, modulo an equivalence $\sim$. The category of sheaves on the flag variety $G/B$ can be realized as $S-\Mod / \sim$,  for $S = \oplus V_{\lambda}$, where the $V_{\lambda}$ are the irreducible representations of $G$. The basic idea in the quantum version is to take the graded algebra $S_{q} = \oplus V_{\lambda}$, where the $V_{\lambda}$ are the irreducible representations of $U_{q} (\g)$ for $q$ generic, and define the category $\Proj (S_{q}) = S_{q}-\Mod/ \sim$. They then define the quantum $G/B$ as the category $\Proj (S_{q})$ thought of as ``$\QCoh (G/B)$". In this paper, we take a similar approach, defining an algebra $S_{q}$, and dg-algebra $\OO_{q}$, from the irreducible representations of $U_{q} (\s_{2})$ at root of unity. However, instead of realizing the derived category of sheaves as a quotient of the derived category of $S_{q}$-modules, we realize it instead as a suitable subcategory of $\D (\OO_{q} -\Mod_{G})$ - the derived category of $G$-equivariant dg $\OO_{q}$-modules.

Backelin and Kremnitzer \cite{back-krem} have also defined a version of the category of sheaves on quantum flag varieties, however we do not take their approach.

A few surprising things happen when working with $q$ being a root of unity. First, in order to define what is naturally the analogue of the structure sheaf $\OO$ we are naturally forced to consider complexes. In some sense this forces us to first define $\D_{G} (\PP^{1}_{q})$, the derived category of $G$-equivariant coherent sheaves on $\PP^{1}_{q}$, rather than defining an abelian category, then taking its derived category.

The layout of the paper is as follows. We first recall the classical McKay correspondence, some basic facts about the derived category of sheaves on $\PP^{1}$, as well as the geometric construction of Kirillov Jr in Section~\ref{s:mckay}. In Section~\ref{s:sheaves} we give an alternate description of the derived category of $G$-equivariant coherent sheaves on $\PP^{1}$, as a full subcategory inside $\D (S-\Mod_{G})$. In Section~\ref{s:qmckay} we recall the quantum McKay correspondence. We then define the analogue of the symmetric algebra $S_{q}$, and the structure sheaf $\OO_{q}$ in Section~\ref{s:qproj}. In Section~\ref{s:qsheaves} we define $\D_{G} (\PP^{1}_{q})$, the derived category of $G$-equivariant coherent sheaves on $\PP^{1}_{q}$, where $G$ is a finite subgroup of $U_{q} (\s_{2})$. With this category in place, we define some natural objects, which turn out to give a complete list of indecomposables. We then relate this to the construction given in \cite{kirillovthind2}. This allows us to view the category constructed in \cite{kirillovthind2} as a combinatorial model for the category of equivariant sheaves on $\PP^{1}_{q}$. In the final section, we construct equivalences to derived categories of quiver representations, or more precisely, to their 2-periodic quotient. Using these equivalences, we show that $\D_{G} (\PP^{1}_{q})$ categorifies the root lattice (and root system) of the corresponding Dynkin graph.


\section{Classical McKay Correspondence}\label{s:mckay}

The classical McKay correspondence gives a bijection between the finite subgroups $G \subset \text{SU}(2)$ and affine $A,D,E$ Dynkin diagrams. The bijection goes as follows. Given $G \subset \text{SU}(2)$, we can construct a graph $\Gamma(G)$ whose vertex set is the set of isomorphism classes of irreducible representations of $G$. Let $n_{ij} = \dim \Hom (X_{i} , \CC^{2} \otimes X_{j} )$. Join $[X_{i}]$ and $[X_{j}]$ by $n_{ij}$ edges. (Note, $n_{ij} = n_{ji}$.)

McKay showed that $G \mapsto \Gamma(G)$ provides a bijection between finite subgroups of $\text{SU}(2)$ and affine $A,D,E$ Dynkin graphs (see \cite{mckay}).

This correspondence can also be approached geometrically, in terms of the associated singularity $\CC^{2}/G$ and its minimal resolution. This has been developed by several authors, notably Gonzales-Sprinberg and Verdier \cite{g-sv} (in terms of K-Theory), Kapranov and Vasserot \cite{kapvas} (in terms of derived categories), and Bridgeland, King and Reid \cite{bkr} (generalization to higher dimension).

There is another geometric approach, due to Kirillov Jr., via the derived category of equivariant sheaves on $\PP^{1}$.  This is the starting point for this paper, so we will briefly review that construction. For full details see \cite{kirillov}.

\subsection{Equivariant sheaves on $\PP^{1}$.}

Let $G \subset \text{SU}(2)$ be a finite subgroup with irreducible representations $X_{i}$, and let $V$ denote the standard 2-dimensional representation of $\text{SU}(2)$. Set $\overline{G} = G/{\pm Id}$ (here, we exclude the cyclic group $G=C_{2n-1}$, corresponding to $\hat{A}_{2n-2}$, though that case can be dealt with by hand). Let $\C = \Coh_{\overline{G}} (\PP^{1})$ be the category of $\overline{G}$-equivariant coherent sheaves on $\PP^{1}$, and let $\D = \D^{b} (\C)$ be its derived category. Before describing the results of \cite{kirillov} we need some preliminaries.

Given a bipartite graph $\Gamma$, let $p: \Gamma \to \Z_{2}$ be a parity function coming from a bipartite splitting. 
Define a quiver structure on $\Gamma \times \Z$ by joining $(i,n)$ to $(j,n+1)$ if $i,j$ are connected in $\Gamma$. Then $\Gamma \times \Z = \Gammahat^{0} \sqcup \Gammahat^{1}$ as disjoint quivers, where
$\Gammahat^{p} = \{ (i,n) \subset \Gamma \times \Z \ | \ p(i) + n = p \mod 2 \}$. 

In the case that $\Gamma$ is finite $A,D,E$ with Coxeter number $h$, define cyclic versions $\Gamma \times \Z_{2h}$, $\Gammahat_{cyc}^{p} = \{ (i,n) \subset \Gamma \times \Z_{2h} \ | \ p(i) + n = p \mod 2 \}$, with arrows $(i,n) \to (j,n+1)$ for $i$ connected to $j$ in $\Gamma$. We will typically drop the superscript and denote $\Gammahat^{0}$ and $\Gammahat^{0}_{cyc}$ by $\Gammahat$ and $\Gammahat_{cyc}$ respectively.

\begin{figure}[ht]
    \centering
        \includegraphics[height=2.50in]{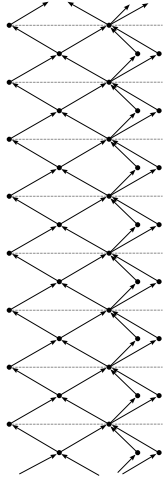}
        \caption{The quiver $\Gammahat$ for graph $\Gamma = D_{5}$. For $D_{5}$, the Coxeter number is 8, so we obtain $\Gammahat_{cyc}$ from this figure by identifying the outgoing arrows at the top level, with the incoming arrows at the bottom level.}\label{f:Ihat-D5}
        \end{figure}

\begin{example}
    For the graph $ \Gamma = D_{5}$ the quiver $\Gammahat$ is shown in
    Figure~\ref{f:Ihat-D5}. 
\end{example}

Define a translation $\tau_{\Gammahat} : \Gammahat \to \Gammahat$ by $\tau_{\Gammahat} (i,n) = (i,n+2)$.

A function $\hh: \Gamma \to \Z$ is called a height function if $\hh(i) + p(i) = 0 \mod 2$ and $\hh(i) = \hh(j) \pm 1$ for $i$ connected to $j$. A height function gives an orientation $\Om_{\hh}$ of $\Gamma$, defined by $i \to j$ in $\Om_{\hh}$ if $i$ is connected to $j$ and $\hh(j) = \hh(i) +1$. Moreover, a height function gives an embedding of $\Gamma$ into $\Gammahat$ as a full connected subquiver, by sending $i \mapsto (i, \hh(i))$. The image is denoted by $\Gamma_{\hh}$ and called a ``slice". Note that as quivers, $\Gamma_{\hh} \simeq (\Gamma , \Om_{\hh} )$.

If $i \in \Gamma$ is a source (or sink) in $\Om_{h}$ we can define a new height function $s_{i}^{\pm} \hh$ by 
$$s_{i}^{\pm} \hh (j) = \begin{cases}
\hh (j) &\text{ if } j\neq i \\
\hh(i) \pm 2 &\text{ if } j=i. \\
\end{cases}$$

In this case the orientation determined by $s_{i}^{\pm} \hh$ is obtained from $\Om_{\hh}$ by reversing all the arrows at $i$.

For each edge $e:(i,n) \to (j,n+1)$ in $\Gammahat$ choose $\epsilon (e) \in \Z_{2}$ so that $\epsilon (e) + \epsilon ( \overline{e} ) = 0$, where $\overline{e} : (j,n+1) \to (i,n+2)$. For each vertex $v \in \Gammahat$ define $\theta_{v}$ by
\begin{equation}\label{e:mesh}
\theta_{v} =\sum_{e: s(e)= v}  \epsilon (e)  \overline{e} e
\end{equation}

where $s(e)$ denotes the source of the edge $e$. Let $J$ be the ``mesh" ideal of the path algebra of $\Gammahat$ generated by the $\theta_{v}$'s. 

With this in place, we can now summarize the results of \cite{kirillov}.

\begin{thm}\label{t:Gsheaves} Let $G$ be a finite subgroup of $\text{SU}(2)$, and $\Gamma = \Gamma(G)$ be the associated graph. Let $\C$ be the category of $\overline{G}$-equivariant coherent sheaves on $\PP^{1}$ and let $\D$ be its bounded derived category.
\begin{enumerate}
\item Let $\K$ denote the Grothendieck group of $\C$. \\
Set $< \F , \G > = \dim \Hom (\F, \G) - \dim \Ext^{1} (\F , \G)$ and $( \F, \G ) = < \F , \G > + < \G, \F >$.\\ 
Then $\K$ can be identified with the root lattice of $\Gamma$, the form $( \cdot \ , \cdot )$ above can be identified with its bilinear form, and the set $\Ind = \{ [X] \in \K \ | \ X \text{ is indecomposable in } \D \}$ can be identified with the set of roots.
\item The map on $\K$ induced by the ``twist" $\F \mapsto \F (-2)$ gives a canonical Coxeter element for this root system.
\item The classes of indecomposable locally free sheaves are of the form $X_{i} (n) := X_{i} \otimes \OO(n)$ for $p(i)+n \equiv 0 \mod 2$, where $X_{i}$ is an irreducible representation of $G$. They are in bijection with vertices of the quiver $\Gammahat$, so that the arrows correspond to indecomposable morphisms $X_{i} (n) \to X_{j} (m)$. 
\item For any height function $\hh: \Gamma \to \Z$, there is an equivalence of triangulated categories 
$$\Phi_{\hh} : \D \to \D^{b} (\Gamma, \Om_{\hh}^{op})$$
given by a tilting object in $\D$.
\item If $i \in \Gamma$ is a source (or sink) for $\Om_{\hh}^{op}$, then the following diagram commutes:
$$\xymatrix{
&  \D^{b} (\Gamma ,\Om_{\hh}^{opp})   \ar[dd]^{S^{\pm}_{i}} \\
\D \ar[dr]_{\Phi_{s^{\pm}_{i} \hh}} \ar[ur]^{\Phi_{\hh}} & \\
& \D^{b} (\Gamma ,\Om^{opp}_{s^{\pm}_{i} \hh}) \\
}$$
where $S^{\pm}_{i}$ is the derived BGP reflection functor.
\item The short exact sequence
\begin{equation*}
0\to \OO \to \OO \otimes V^{*} \to \OO (2) \otimes \Lambda^{2} V^{*} \to 0
\end{equation*}
gives a short exact sequence of indecomposable locally free sheaves
\begin{equation*}
0 \to X_{i} (n) \to \bigoplus_{i-j} X_{j} (n+1) \to X_{i} (n+2) \to 0
\end{equation*}
which induces the full set of relations in $\K$.
\item $\Hom (X_{i} (n) , X_{j} (m)) = \Path_{\Gammahat} ((i,n), (j,m)) / J$ where $J$ is the mesh ideal defined above.
\item Serre Duality: $\big{(} \Hom (\F , \G) \big{)}^{*} \simeq \Ext^{1} (\G , \F (-2))$.
\end{enumerate}
\end{thm}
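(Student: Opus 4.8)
The plan is to deduce the $\overline{G}$-equivariant statement from ordinary Serre duality on $\PP^{1}$ by passing to $\overline{G}$-invariants. Recall that on $\PP^{1}$ the dualizing sheaf is $\omega = \OO(-2)$ and that classical Serre duality provides a natural isomorphism $\Hom_{\PP^{1}}(\F,\G)^{*} \cong \Ext^{1}_{\PP^{1}}(\G, \F\otimes\omega)$, which upgrades to $\D^{b}(\Coh\PP^{1})$ as the assertion that $S = -\otimes\OO(-2)[1]$ is a Serre functor. Since $\Hom_{\D^{b}}(\G, \F(-2)[1]) = \Ext^{1}(\G, \F(-2))$, the classical form of the desired identity is already visible; the work is entirely in keeping track of the $\overline{G}$-action.

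First I would pin down the equivariant dualizing sheaf. Writing $\PP^{1} = \PP(V)$ for the standard representation $V$, dualizing the Euler sequence underlying part~(6) identifies the canonical bundle, as a $G$-equivariant line bundle, with $\OO(-2)\otimes\Lambda^{2} V$. Because $G \subset \text{SU}(2) \subset \text{SL}(2,\CC)$, every element of $G$ acts on $V$ with determinant $1$, so $\Lambda^{2} V \cong \CC$ is the trivial representation; as $-\mathbb{I}$ acts trivially on $\Lambda^{2} V$ this is equally a statement about $\overline{G}$. Hence the equivariant dualizing sheaf is $\OO(-2)$ \emph{with no extra character twist}, and the equivariant Serre functor coincides exactly with the twist $\F\mapsto\F(-2)[1]$ appearing throughout the theorem.

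Next I would pass to invariants. Since $\overline{G}$ is finite and we work over $\CC$, the order $|\overline{G}|$ is invertible, higher group cohomology vanishes, and the equivariant morphism and extension groups are the $\overline{G}$-invariants of the underlying ones: $\Hom_{\C}(\F,\G) = \Hom_{\PP^{1}}(\F,\G)^{\overline{G}}$ and $\Ext^{1}_{\C}(\F,\G) = \Ext^{1}_{\PP^{1}}(\F,\G)^{\overline{G}}$ (choose equivariant injective resolutions and use exactness of $(-)^{\overline{G}}$). As invariants commute with linear duality, $(W^{\overline{G}})^{*} \cong (W^{*})^{\overline{G}}$, applying $(-)^{\overline{G}}$ to the classical isomorphism gives $\big(\Hom(\F,\G)\big)^{*} \cong \Ext^{1}(\G,\F(-2))$ in $\C$, which is the statement.

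The step requiring care is the functoriality of the classical duality with respect to the group action. The Serre pairing is built from the trace map $H^{1}(\PP^{1},\omega)\to\CC$, and one must verify this map is $\overline{G}$-invariant. Under the identification $\omega=\OO(-2)\otimes\Lambda^{2} V$ the cohomology $H^{1}(\PP^{1},\omega)$ is $\Lambda^{2} V$ as a $\overline{G}$-representation, so the trace is a $\overline{G}$-equivariant map $\Lambda^{2} V \to \CC$; its invariance is precisely the triviality of $\Lambda^{2} V$ established above. Thus once the equivariant dualizing sheaf is correctly identified, naturality of Serre duality makes the pairing automatically $\overline{G}$-compatible, and the only remaining operation is taking invariants. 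I expect this identification of the trace/residue with the (trivial) determinant character to be the one genuinely new point beyond the classical theory.
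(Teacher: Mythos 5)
Your proposal has a fundamental gap: it addresses only part (8) of an eight-part theorem. Parts (1)--(7) --- the identification of $\K$ with the root lattice and of indecomposables with roots, the twist $\F \mapsto \F(-2)$ as Coxeter element, the classification of indecomposable locally free sheaves as the $X_{i}(n) = X_{i}\otimes\OO(n)$ with $p(i)+n \equiv 0 \bmod 2$, the tilting equivalences $\Phi_{\hh}$, their compatibility with BGP reflection functors, the fundamental short exact sequences generating the relations in $\K$, and the mesh-ideal description of $\Hom(X_{i}(n),X_{j}(m))$ --- are never touched, and they are the substance of the result. None of them follows from Serre duality; they require a different program: construct the bundles $X_{i}(n)$ and prove they exhaust the indecomposable locally free objects, form for each height function $\hh$ the object $\T_{\hh} = \bigoplus_{(i,n)\in\Gamma_{\hh}} X_{i}(n)$, compute $\End(\T_{\hh})$ as the path algebra of $(\Gamma,\Om_{\hh})$ and verify the $\Ext$-vanishing that makes $\T_{\hh}$ a tilting object, apply tilting theory to obtain part (4), and then transport Gabriel-type results on quiver representations back across $\Phi_{\hh}$ to get (1), (2), (6) and (7). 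Note also that the paper you are being compared against does not prove this theorem at all: it is stated explicitly as a summary of the results of the reference [K] (Kirillov Jr., \emph{McKay correspondence and equivariant sheaves on} $\mathbf{P}^{1}$), where that program is carried out; so even a complete proof of (8) alone could not match the intended content.

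That said, the piece you did write is essentially correct and is the standard argument for the equivariant duality statement: identify the equivariant canonical bundle via the Euler sequence as $\OO(-2)\otimes\Lambda^{2}V$, observe that $\Lambda^{2}V$ is the trivial character because $G\subset \mathrm{SU}(2)\subset \mathrm{SL}(2,\CC)$ (and $-\mathbb{I}$ acts trivially on it, so the statement descends to $\overline{G}$), and then take $\overline{G}$-invariants, using exactness of invariants in characteristic zero, the collapse of the group-cohomology spectral sequence identifying $\Ext^{1}_{\C}$ with $\bigl(\Ext^{1}_{\PP^{1}}\bigr)^{\overline{G}}$, and compatibility of invariants with linear duality. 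Your observation that the trace map $H^{1}(\PP^{1},\omega)\to\CC$ is $\overline{G}$-invariant precisely because $\Lambda^{2}V$ is trivial is the right key point for (8). If you want to complete the proof of the full theorem, the place to start is part (4): everything else in (1)--(3) and (5)--(7) is organized around the tilting equivalences, with (8) used along the way (for instance to get the Auslander--Reiten structure in (3) and (7)).
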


\section{Alternate description of sheaves on $\PP^{1}$}\label{s:sheaves}

Before giving an alternate description of the derived category of equivariant sheaves on $\PP^{1}$, let us first recall the standard one. Let $\PP^{1} = \PP (V^{*})$, where $V$ is the canonical 2 dimensional representation of $\text{SU}(2)$. Let $S= S(V)$ denote the symmetric algebra of $V$. Note that $S = \bigoplus V_{n}$, where $V_{n}$ is the irreducible $\text{SU}(2)$ representation of highest weight $n$. Let $S-\Mod^{f}_{gr}$ denote the category of finitely generated, graded $S$-modules. For a graded $S$-module $M$, denote by $M_{k}$ its k-th graded component. There is a twist functor $(n)$ on $S-\Mod^{f}_{gr}$ defined by $M(n)_{k} = M_{n+k}$. Define an equivalence $\sim$ on $S-\Mod^{f}_{gr} $ by $M\sim N$ if $M_{n} = N_{n}$ for all $n\gg 0$.

Consider the category $\Coh (\PP^{1})$ of coherent sheaves. Recall the exact functor $(n) : \Coh \to \Coh$ defined by $\F \mapsto \F (n) = \F \otimes \OO(n)$ (the ``twist"). There is a left exact functor $\Gamma_{*} : \Coh(\PP^{1}) \to S-\Mod^{f}_{gr}$ defined by $\Gamma_{*} (\F) = \bigoplus \Gamma (\F(n))$, where $\Gamma$ is the global sections functor. There is a functor $\widetilde{} : S-\Mod^{f}_{gr} \to \Coh(\PP^{1})$ taking a graded module to a coherent sheaf. We will not define this functor, but a definition can be found in \cite{hartshorne} (p.116). We have that $\widetilde{\Gamma_{*} (\F)} \simeq \F$, and $\Gamma_{*} (\widetilde{M})_{k} = M_{k}$ in high enough homogeneous degree $k$.

The following result (going back to Serre \cite{serre}) describes the category of coherent sheaves on $\PP^{1}$ in terms of graded modules over $S$. (For more details, see \cite{hartshorne} or \cite{serre}.)

\begin{thm}
There is an equivalence of categories between $\Coh (\PP^{1})$ and $S-\Mod_{gr}^{f} / \sim $, given by $\Gamma_{*}$.
\end{thm}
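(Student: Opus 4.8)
The plan is to exhibit $\Gamma_{*}$ and the sheafification functor $M \mapsto \widetilde{M}$ as mutually quasi-inverse equivalences after passing to the quotient by $\sim$. The first conceptual step is to recognize the relation $\sim$ as a \emph{Serre localization}. Declaring $M \sim N$ when $M_{n} = N_{n}$ for all $n \gg 0$ amounts to inverting every morphism whose kernel and cokernel are concentrated in finitely many degrees; equivalently, $S-\Mod^{f}_{gr}/\sim$ is the Serre quotient of $S-\Mod^{f}_{gr}$ by the thick subcategory $\T$ of modules $M$ with $M_{n} = 0$ for $n \gg 0$ (the finite-dimensional, or $S_{+}$-torsion, modules). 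So I would first check that $\T$ is a Serre subcategory and that the localization $S-\Mod^{f}_{gr}/\sim$ coincides with the quotient $S-\Mod^{f}_{gr}/\T$, with the expected description of morphisms as roofs.

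Next I would verify that both functors are compatible with this quotient. The functor $M \mapsto \widetilde{M}$ annihilates every $M \in \T$: a graded module concentrated in finitely many degrees has vanishing localizations at the relevant homogeneous primes, hence sheafifies to the zero sheaf. Therefore $M \mapsto \widetilde{M}$ factors through $S-\Mod^{f}_{gr}/\sim$, producing a functor $S-\Mod^{f}_{gr}/\sim \; \to \Coh(\PP^{1})$. On the other side, $\Gamma_{*}$ is already defined on $\Coh(\PP^{1})$ with values in $S-\Mod^{f}_{gr}$, and I simply compose it with the quotient functor $S-\Mod^{f}_{gr} \to S-\Mod^{f}_{gr}/\sim$.

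The heart of the argument is the pair of natural comparison morphisms relating the two composites. The counit $\widetilde{\Gamma_{*}(\F)} \to \F$ is recorded in the setup as a natural isomorphism $\widetilde{\Gamma_{*}(\F)} \simeq \F$, which shows that the composite $\F \mapsto \widetilde{\Gamma_{*}(\F)}$ is naturally isomorphic to the identity on $\Coh(\PP^{1})$. For the other composite I would examine the unit $\eta_{M} : M \to \Gamma_{*}(\widetilde{M})$. By the stated identity $\Gamma_{*}(\widetilde{M})_{k} = M_{k}$ for $k \gg 0$, the map $\eta_{M}$ is an isomorphism in all sufficiently high degrees, so its kernel and cokernel lie in $\T$ and $\eta_{M}$ becomes an isomorphism in $S-\Mod^{f}_{gr}/\sim$. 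Hence $M \mapsto \Gamma_{*}(\widetilde{M})$ is naturally isomorphic to the identity on the quotient, and the two functors are quasi-inverse equivalences.

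The genuine geometric content — and thus the main obstacle if one does not quote it — lies entirely in justifying the two displayed natural isomorphisms, which rest on Serre's finiteness and vanishing theorems: for $n \gg 0$ the twist $\F(n)$ is generated by global sections and $H^{1}(\PP^{1}, \F(n)) = 0$, making the counit an isomorphism, and $\Gamma(\PP^{1}, \OO(n)) = S_{n}$ for $n \geq 0$, pinning down $\Gamma_{*}(\widetilde{M})$ in high degree. These are precisely the inputs proved in \cite{serre} and \cite{hartshorne} and quoted in the preceding discussion; granting them, the remaining work is the formal verification that $\sim$ is a Serre quotient and that the unit and counit have kernel and cokernel in $\T$, which is routine.
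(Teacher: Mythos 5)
The paper gives no proof of this statement at all: it is quoted as a classical result going back to Serre, with pointers to \cite{serre} and \cite{hartshorne}, and the surrounding text merely records the two comparison facts ($\widetilde{\Gamma_{*}(\F)} \simeq \F$ and $\Gamma_{*}(\widetilde{M})_{k} = M_{k}$ for $k \gg 0$) that you build on. Your proposal reconstructs the standard argument behind that citation, and its architecture is right: reading $\sim$ as the Serre quotient by the thick subcategory $\T$ of modules vanishing in high degree is the correct way to make the paper's loosely stated relation precise; the reduction of the equivalence to (i) $\widetilde{M} = 0$ for $M \in \T$, (ii) the counit isomorphism, and (iii) the unit becoming invertible in the quotient is exactly how the proof goes; and you correctly isolate where the geometry enters (generation of $\F(n)$ by global sections and $H^{1}$-vanishing for $n \gg 0$, plus $\Gamma(\OO(n)) = S_{n}$), the rest being formal localization theory.

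One step needs more care than you (or the paper's own definition) give it. As literally defined, $\Gamma_{*}(\F) = \bigoplus_{n} \Gamma(\F(n))$ taken over all $n \in \Z$ need \emph{not} be finitely generated: for a skyscraper sheaf the summand $\Gamma(\F(n))$ is nonzero for every negative $n$, and the resulting module is not finitely generated over $S$. Consequently $\Gamma_{*}$ does not strictly take values in $S-\Mod^{f}_{gr}$, and the cokernel of your unit $\eta_{M} : M \to \Gamma_{*}(\widetilde{M})$ is then not an object of $\T$ (it is not even an object of the category you are quotienting). The standard repair is to truncate: replace $\Gamma_{*}(\F)$ by $\bigoplus_{n \geq 0} \Gamma(\F(n))$, which is finitely generated by Serre's finiteness theorem, represents the same object of the quotient category, and makes the kernel and cokernel of the unit genuinely finite-dimensional, hence in $\T$. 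With that adjustment your argument is complete and is the standard proof of the theorem the paper cites.
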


Moreover, there is an equivariant version of this statement. If $G$ is a finite subgroup of $\text{SU}(2)$, then we can realize $\Coh_{G} (\PP^{1})$ as a suitable quotient of the category of finitely generated, graded, $G$-equivariant $S$ modules, denoted by $S-\Mod_{G}$. Objects  can be written as $M = \bigoplus M_{k}$, where each $M_{k}$ is a $G$-module, and the map $S\otimes M \to M$ is a graded map of $G$-modules.

These descriptions also allow us to describe the corresponding derived categories $\D (\PP^{1})$ and $\D_{G} (\PP^{1})$ as certain quotients of $\D (S-\Mod_{gr})$ and $\D (S-\Mod_{G})$, respectively. However, for the purposes of this paper, we would like to realize them as certain subcategories instead.

\subsection{Short Exact Sequences and Triangles}

On $\PP^{1}$ there is a natural short exact sequence of sheaves 
\begin{equation}\label{e:oseq}
0\to \OO \to \OO(1) \otimes V^{*} \to \OO (2) \otimes \Lambda^{2} V^{*} \to 0.
\end{equation}
In particular, there is an induced triangle in $\D (\PP^1)$. Since $\OO$ has a natural $G$-equivariant structure, we also get a triangle in $\D_{G} (\PP^1)$.
Let $X_{i}$ be an irreducible representation of $G$. Then tensoring with the locally free sheaf $\OO(n) \otimes X_{i}$, and choosing an identification of $\Lambda^{2} V^{*} \simeq \CC$, gives a short exact sequence of $G$-equivariant locally free sheaves
$$0 \to \OO(n) \otimes X_{i} \to \OO(n+1) \otimes V^{*} \otimes X_{i} \to \OO(n+2) \otimes X_{i} \to 0,$$
and hence a triangle in $\D_{G} (\PP^1)$.
Since $X_{i} \otimes V = \bigoplus_{i-j} X_{j}$ we can rewrite this sequence as 
\begin{equation}\label{e:fundseq}
0 \to X_{i} (n) \to \bigoplus_{i-j} X_{j} (n+1) \to X_{i} (n+2) \to 0
\end{equation}
where the notation ``$i-j$" means all vertices $j$ connected to $i$ in $\Gamma$.

For any complex of sheaves $\F^{\bullet}$ there is a triangle $\F^{\bullet} \to \F^{\bullet} (1) \otimes V^{*} \to \F^{\bullet} (2) \to \F^{\bullet} [1]$, obtained by tensoring $\F^{\bullet}$ with the triangle coming from Equation~\ref{e:oseq}.

Applying the derived functor $R \Gamma_{*}$ to the triangle in Equation~\ref{e:oseq} gives us a triangle in $\D (S-\Mod_{G})$:
\begin{equation}\label{e:sseq}
S \to S (1) \otimes V^{*} \to S(2) \to S[1].
\end{equation}

In homogeneous degree $k \geq -1$ this is the triangle coming from the short exact sequence $0 \to V_{k} \to V_{k+1} \otimes V^{*} \to V_{k+2} \to 0$.

\begin{defi}\label{d:DG}
Let $\D_{G} \subset \D(S-\Mod_{G})$ be the full subcategory of objects $M^{\bullet}$ such that
$$M^{\bullet} \to M^{\bullet}(1) \otimes V \to M^{\bullet}(2) \to M^{\bullet} [1]$$
is a triangle. (I.e. the result of tensoring the triangle coming from Equation~\ref{e:sseq} with $M^{\bullet}$ is again a triangle.)
\end{defi}

\begin{thm}\label{t:altsheaves} \
\begin{enumerate}
\item $\D_{G}$ is a triangulated subcategory of $\D(S-\Mod_{G})$.
\item There is an equivalence of categories $\D_{G} (\PP^{1}) \to \D_{G}$ given by $R\Gamma_{*}$.
\end{enumerate}
\end{thm}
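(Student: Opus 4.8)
The plan is to prove the two parts in turn, with essentially all of the content concentrated in the essential surjectivity of $R\Gamma_{*}$ onto $\D_{G}$.

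For (1), I would first note that $0 \in \D_{G}$, that $\D_{G}$ is closed under isomorphism, and that it is closed under $[\pm 1]$: the defining triangle for $M^{\bullet}[1]$ is the shift of that for $M^{\bullet}$ (the twist $(n)$ and $\otimes V$ commute with the shift), and shifts of distinguished triangles are distinguished. The substantive point is closure under cones, and I would package the condition functorially. The map $S \to S(1)\otimes V$ of Equation~\ref{e:sseq} is an honest map of graded modules, so after tensoring it realizes a genuine chain-level natural map $a_{M^{\bullet}}\colon M^{\bullet} \to M^{\bullet}(1)\otimes V$; let $C(M^{\bullet}) = \mathrm{Cone}(a_{M^{\bullet}})$, a triangulated endofunctor of $\D(S-\Mod_{G})$, and let $\beta\colon C \Rightarrow (2)$ be the natural transformation induced by $S(1)\otimes V \to S(2)$. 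By construction $M^{\bullet} \in \D_{G}$ exactly when $\beta_{M^{\bullet}}$ is an isomorphism. Given a distinguished triangle $A^{\bullet} \to B^{\bullet} \to D^{\bullet} \to A^{\bullet}[1]$ with $A^{\bullet}, B^{\bullet} \in \D_{G}$, applying the triangulated functors $C$ and $(2)$ produces two distinguished triangles joined by the morphism $\beta$; since $\beta_{A^{\bullet}}$ and $\beta_{B^{\bullet}}$ are isomorphisms, the two-out-of-three property for a morphism of distinguished triangles forces $\beta_{D^{\bullet}}$ to be an isomorphism, whence $D^{\bullet} \in \D_{G}$. This gives (1).

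For (2) I would argue in three steps. First, $R\Gamma_{*}$ lands in $\D_{G}$: applying the triangulated functor $R\Gamma_{*}$ to the triangle $\F^{\bullet} \to \F^{\bullet}(1)\otimes V \to \F^{\bullet}(2) \to \F^{\bullet}[1]$ available for every complex of sheaves, and using $R\Gamma_{*}(\F^{\bullet}(n)) \cong (R\Gamma_{*}\F^{\bullet})(n)$ together with $R\Gamma_{*}(\F^{\bullet}\otimes V) \cong (R\Gamma_{*}\F^{\bullet})\otimes V$, reproduces exactly the defining triangle for $R\Gamma_{*}\F^{\bullet}$. Second, full faithfulness: sheafification $\widetilde{(\cdot)}\colon \D(S-\Mod_{G}) \to \D_{G}(\PP^{1})$ is exact and left adjoint to $R\Gamma_{*}$, and the counit $\widetilde{R\Gamma_{*}\F^{\bullet}} \to \F^{\bullet}$ is an isomorphism (the derived form of $\widetilde{\Gamma_{*}\F} \cong \F$). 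A right adjoint with invertible counit is fully faithful; concretely $\RHom_{S}(R\Gamma_{*}\F^{\bullet}, R\Gamma_{*}\G^{\bullet}) \cong \RHom_{\PP^{1}}(\widetilde{R\Gamma_{*}\F^{\bullet}}, \G^{\bullet}) \cong \RHom_{\PP^{1}}(\F^{\bullet}, \G^{\bullet})$.

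The third step, essential surjectivity onto $\D_{G}$, is where the definition of $\D_{G}$ does the real work. Given $M^{\bullet} \in \D_{G}$, set $\F^{\bullet} = \widetilde{M^{\bullet}}$ and consider the unit $\eta\colon M^{\bullet} \to R\Gamma_{*}\widetilde{M^{\bullet}}$. Both source and target lie in $\D_{G}$ (the target by the first step), so the cone $K^{\bullet} = \mathrm{Cone}(\eta)$ lies in $\D_{G}$ by part (1); moreover $\widetilde{K^{\bullet}} = 0$, since applying $\widetilde{(\cdot)}$ to $\eta$ yields an isomorphism by the triangle identities and the counit isomorphism. It thus suffices to show that $K^{\bullet} \in \D_{G}$ with $\widetilde{K^{\bullet}} = 0$ forces $K^{\bullet} = 0$. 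Writing $H^{i} = H^{i}(K^{\bullet})$ (each a graded module vanishing in high degree, as $\widetilde{K^{\bullet}} = 0$) and using exactness of the twist and of $\otimes V$, the defining triangle induces the long exact sequence
\[
\cdots \to H^{i} \xrightarrow{\ a\ } H^{i}(1)\otimes V \xrightarrow{\ b\ } H^{i}(2) \to H^{i+1} \to \cdots .
\]
If $K^{\bullet} \neq 0$, let $i_{1}$ be least with $H^{i_{1}} \neq 0$; then the incoming term $H^{i_{1}-1}(2)$ vanishes, so $a\colon H^{i_{1}} \to H^{i_{1}}(1)\otimes V$ is injective. But $H^{i_{1}}$ vanishes in high degree, so at its top nonzero graded degree $k$ the target $(H^{i_{1}}(1)\otimes V)_{k} = H^{i_{1}}_{k+1}\otimes V = 0$, forcing $a$ to vanish there and contradicting injectivity. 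Hence $K^{\bullet} = 0$, $\eta$ is an isomorphism, and $M^{\bullet}$ is in the essential image; with the first two steps this proves $R\Gamma_{*}$ is an equivalence onto $\D_{G}$.

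The main obstacle is this last step, and inside it the honest bookkeeping of graded degrees: one must ensure the relevant torsion cohomology modules are bounded above in the grading (so a top nonzero degree exists), which is exactly where the precise conventions for $S-\Mod_{G}$ and the boundedness of the higher sheaf cohomology of coherent sheaves on $\PP^{1}$ enter. Secondary technical points, standard once a model for $R\Gamma_{*}$ is fixed, are the justification of the chain-level cone functor $C$ in part (1) and of the adjunction with invertible counit at the derived level in the full faithfulness step.
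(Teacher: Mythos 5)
Your proof is correct, and its main step takes a genuinely different route from the paper's. For part (1) the two arguments agree in substance --- closure under cones is the only real issue --- though your functorial packaging (the chain-level cone functor $C$ with the natural transformation $\beta\colon C \Rightarrow (2)$, plus two-out-of-three for a morphism of distinguished triangles) is a tidier justification of what the paper dismisses as straightforward from the definition of the cone. The divergence is in part (2). The paper does not use the adjunction at all: its Lemma~\ref{l:prelim} shows that $R\Gamma_{*}(\widetilde{M^{\bullet}})$ is quasi-isomorphic to $M^{\bullet}$ in all sufficiently high homogeneous degrees (the classical Serre comparison), and then propagates agreement downward one degree at a time, using the defining triangle (which couples degrees $n$, $n+1$, $n+2$) together with the Gelfand--Manin axiom on completing a morphism of distinguished triangles, the completing map being a quasi-isomorphism by the five lemma. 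You instead exhibit the comparison as the honest unit morphism $\eta\colon M^{\bullet} \to R\Gamma_{*}\widetilde{M^{\bullet}}$ in $\D(S-\Mod_{G})$ and kill its cone: it lies in $\D_{G}$ by part (1), sheafifies to zero, and a torsion object of $\D_{G}$ must vanish by your long-exact-sequence/top-graded-degree argument. Both proofs rest on the same principle --- objects of $\D_{G}$ carry no torsion, so they are determined by their behaviour in high degrees --- but your route buys rigor at exactly the point where the paper is thinnest: you never need to assemble degree-by-degree quasi-isomorphisms into a single isomorphism of graded modules (the maps produced by triangle completion are not obviously compatible with the $S$-action across degrees, a point the paper elides when it concludes $R\Gamma_{*}(\widetilde{M^{\bullet}}) \simeq M^{\bullet}$), and you get full faithfulness explicitly from the exact-left-adjoint/derived-right-adjoint formalism, which the paper never spells out. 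What the paper's approach buys in exchange is independence from the adjunction machinery and a downward-induction technique that it reuses essentially verbatim in the quantum setting (Lemma~\ref{l:determine}), where no sheaf-theoretic adjoint is available.
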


This result should be known to experts, but we have not been able to find it in the literature, so we shall give the proof. We shall need some preliminary results.

\begin{lemma}\label{l:prelim} \
\begin{enumerate}
\item For any complex of sheaves $\F^{\bullet} \in \D_{G} (\PP^{1})$, $R\Gamma_{*} (\F^{\bullet}) \in \D_{G}$. (I.e. the image of the functor $R\Gamma_{*}$ lies in $\D_{G}$.)
\item If $M^{\bullet}, N^{\bullet} \in \D_{G}$, and for some $n_{0} > 0$, we have that $M^{\bullet}_{n} \sim N^{\bullet}_{n}$ for all $n > n_{0}$, then, in fact, $M^{\bullet}_{n} \sim N^{\bullet}_{n}$ for all $n$.
\end{enumerate}
\end{lemma}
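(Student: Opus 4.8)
The plan is to handle the two parts separately: part (1) follows from the exactness of $R\Gamma_{*}$ together with its compatibility with twists and tensoring, while part (2) is a descending induction on the grading driven by the defining triangle of $\D_{G}$.

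For part (1), I would begin with the triangle $\F^{\bullet}\to\F^{\bullet}(1)\otimes V^{*}\to\F^{\bullet}(2)\to\F^{\bullet}[1]$ in $\D_{G}(\PP^{1})$ obtained by tensoring $\F^{\bullet}$ with the triangle of Equation~\ref{e:oseq}, and apply the triangulated functor $R\Gamma_{*}$. Since $R\Gamma_{*}$ is exact it sends this to a distinguished triangle, and the content is to identify the terms: using the natural isomorphism $R\Gamma_{*}(\F^{\bullet}(n))\cong R\Gamma_{*}(\F^{\bullet})(n)$ (compatibility of the twist on sheaves with the twist on graded modules) and $R\Gamma_{*}(\F^{\bullet}\otimes V^{*})\cong R\Gamma_{*}(\F^{\bullet})\otimes V^{*}$ (the fixed finite-dimensional $G$-module $V^{*}$ passes through $R\Gamma_{*}$ by the projection formula), the resulting triangle is $M^{\bullet}\to M^{\bullet}(1)\otimes V^{*}\to M^{\bullet}(2)\to M^{\bullet}[1]$ for $M^{\bullet}=R\Gamma_{*}(\F^{\bullet})$. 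As the maps of Equation~\ref{e:sseq} are themselves the $R\Gamma_{*}$-images of the maps of Equation~\ref{e:oseq}, this is exactly the triangle obtained by tensoring the triangle of Equation~\ref{e:sseq} with $M^{\bullet}$, so $M^{\bullet}\in\D_{G}$ by Definition~\ref{d:DG}. (Here I use the self-duality $V\cong V^{*}$ for $\text{SU}(2)$ to match the form written in the definition.)

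For part (2), the key observation is that taking the graded component of degree $k$ is an exact functor $\D(S-\Mod_{G})\to\D(\Rep G)$, hence carries distinguished triangles to distinguished triangles. Applying it to the defining triangle of $M^{\bullet}\in\D_{G}$ and using $(M^{\bullet}(1))_{k}=M^{\bullet}_{k+1}$, $(M^{\bullet}(2))_{k}=M^{\bullet}_{k+2}$, produces for every $k$ a distinguished triangle
$$M^{\bullet}_{k}\to M^{\bullet}_{k+1}\otimes V^{*}\to M^{\bullet}_{k+2}\to M^{\bullet}_{k}[1]$$
in $\D(\Rep G)$, and likewise for $N^{\bullet}$. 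The hypothesis supplies a morphism $f:M^{\bullet}\to N^{\bullet}$ whose degree-$n$ component $f_{n}:M^{\bullet}_{n}\to N^{\bullet}_{n}$ is an isomorphism for all $n>n_{0}$; since the defining triangle is functorial in its object, $f$ induces a morphism of the two triangles above, with vertical maps $f_{k}$, $f_{k+1}\otimes 1$, and $f_{k+2}$. I would then argue by descending induction: let $k$ be the largest index for which $f_{k}$ fails to be an isomorphism (if one exists, it satisfies $k\le n_{0}$, as all higher components are isomorphisms). Then $f_{k+1}$ and $f_{k+2}$ are isomorphisms, so are $f_{k+1}\otimes 1$ and $f_{k+2}$, and the triangulated five lemma (two of three vertical maps in a morphism of triangles being isomorphisms forces the third) makes $f_{k}$ an isomorphism, a contradiction. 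Hence $f_{n}$ is an isomorphism, i.e. $M^{\bullet}_{n}\sim N^{\bullet}_{n}$, for all $n$.

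The routine compatibilities in part (1) --- that $R\Gamma_{*}$ commutes with the twist and with tensoring by a fixed representation, and that it identifies the sheaf triangle with the triangle of Equation~\ref{e:sseq} --- carry the bookkeeping but are standard. I expect the one genuine subtlety to be conceptual rather than computational: one must use the definition of $\D_{G}$ in the precise form that membership concerns the functorially induced triangle, so that the single morphism $f$ really does restrict to a morphism of triangles in each degree. Granting this, part (2) is just the triangulated five lemma packaged with an induction, and the fact that the exceptional degrees are bounded above by $n_{0}$ is what lets the descending argument reach every integer.
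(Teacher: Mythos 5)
Part (1) of your proposal is the paper's own argument: tensor $\F^{\bullet}$ with the triangle of Equation~\ref{e:oseq} and apply the exact functor $R\Gamma_{*}$. You spell out the bookkeeping the paper leaves implicit (that $R\Gamma_{*}$ commutes with the twist and with tensoring by the fixed representation, and the $V\cong V^{*}$ identification needed to match Definition~\ref{d:DG} with Equation~\ref{e:sseq}); this is fine and even slightly more careful than the original.

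Part (2) runs on the same engine as the paper's proof --- degreewise distinguished triangles extracted from the defining triangle of $\D_{G}$, the two-out-of-three property for morphisms of triangles, and downward induction starting at $n_{0}$ --- but differs in where the comparison maps come from, and this is a genuine difference. You assume the hypothesis supplies a single global morphism $f\colon M^{\bullet}\to N^{\bullet}$ whose components are quasi-isomorphisms above $n_{0}$, and then use functoriality of the defining triangle to get morphisms of triangles in every degree. The paper assumes no such global map: it takes the given quasi-isomorphisms $g,h$ in degrees $n+1$ and $n+2$ and invokes the completion axiom for triangulated categories (citing \cite{gelfman}, Corollary 4, p.~242) to \emph{construct} a quasi-isomorphism $f$ in degree $n$, descending one degree at a time. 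As a proof of the lemma as literally stated, your version therefore proves a formally weaker statement, since the stated hypothesis only asserts degreewise quasi-isomorphism, not that these isomorphisms assemble into a morphism. That said, the paper's own step is not innocent either: applying the completion axiom requires the square formed by $g$, $h$ and the structure maps $M^{\bullet}_{n+1}\otimes V\to M^{\bullet}_{n+2}$, $N^{\bullet}_{n+1}\otimes V\to N^{\bullet}_{n+2}$ to commute, which the bare hypothesis does not guarantee; some compatibility must be read into ``$\sim$'' for either argument (or indeed the statement itself) to work. In the one place the lemma is used --- comparing $M^{\bullet}$ with $R\Gamma_{*}(\widetilde{M^{\bullet}})$ via the canonical unit map --- the global morphism you postulate is exactly what is available, so your route proves what is actually needed, trading the TR3 completion step for functoriality at the cost of strengthening the hypothesis.
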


\begin{proof} \
\begin{enumerate}
\item For any complex of sheaves $\F^{\bullet}$ there is a triangle $\F^{\bullet} \to \F^{\bullet} (1) \otimes V^{*} \to \F^{\bullet} (2) \to \F^{\bullet} [1]$,  obtained by tensoring $\F^{\bullet}$ with the triangle coming from Equation~\ref{e:oseq}. Since $R\Gamma_{*}$ is an exact functor, it takes triangles to triangles. Hence $R\Gamma_{*} (\F^{\bullet}) \in \D_{G}$.
\item Since $M^{\bullet} , N^{\bullet} \in \D_{G}$, and $M^{\bullet}_{n} \sim N^{\bullet}_{n}$ for all $n > n_{0}$, then provided that $n \geq n_{0}$ there are triangles and quasi-isomorphisms $g,h$ as in the diagram below.

$$
\xymatrix{
M^{\bullet}_{n}  \ar[r] \ar[d]_{\exists f} & M^{\bullet}_{n+1} \otimes V \ar[r] \ar[d]_{g} & M^{\bullet}_{n +2} \ar[r] \ar[d]_{h} & M^{\bullet}_{n} [1] \ar[d]_{f[1]} \\
N^{\bullet}_{n}  \ar[r] & N^{\bullet}_{n+1} \otimes V \ar[r] & N^{\bullet}_{n +2} \ar[r] & N^{\bullet}_{n} [1] \\
}$$
This diagram can be completed with a map $f: M^{\bullet}_{n} \to N^{\bullet}_{n}$, which is also a quasi-isomorphism. (See \cite{gelfman} Corollary 4, p. 242.) Taking $n=n_{0}$, we see that $M^{\bullet}_{n_{0}} \simeq N^{\bullet}_{n_{0}}$. Continuing in this way, we can show that for any $n \leq n_{0}$,  $M^{\bullet}_{n} \simeq N^{\bullet}_{n}$. Hence $M^{\bullet}_{n} \sim N^{\bullet}_{n}$ for all $n$.

\end{enumerate}
\end{proof}

We can now complete the proof of Theorem~\ref{t:altsheaves}
\begin{proof}
For (1), since $\D_{G}$ is full, once we verify that any morphism $f:M^{\bullet} \to N^{\bullet}$ can be completed to a triangle, the rest of the axioms of a triangulated category follow easily. To do this, we just verify that if we complete the triangle in $\D(S-\Mod_{G})$, the completion actually lies in $\D_{G}$. Checking this is straightforward and follows from the definition of cone of a morphism.
For (2), first note that $R\Gamma_{*} (\widetilde{M^{\bullet}})$ is quasi-isomorphic to $M^{\bullet}$ in high enough homogeneous degree. Since $R\Gamma_{*} (\widetilde{M^{\bullet}}) \in \D_{G}$ (by Part (1) of Lemma~\ref{l:prelim}), Part (2) of Lemma~\ref{l:prelim} implies that $R\Gamma_{*} (\widetilde{M^{\bullet}})$ is quasi-isomorphic to $M^{\bullet}$ in all degrees, and hence $R\Gamma_{*} (\widetilde{M^{\bullet}}) \simeq M$.  
\end{proof}

\section{Quantum McKay Correspondence}\label{s:qmckay}
In this section we review the quantum analog of McKay correspondence. Such a correspondence should be between ``finite subgroups of $U_{q} (\s_{2})$" and finite A,D,E Dynkin diagrams. This first appeared in Ocneanu's work and is framed in the language of subfactors (\cite{oc}). The version we present is in the language of algebras in tensor categories, and comes from  the representation theory of $U_{q} (sl_2)$ at root of unity. It is due to Kirillov Jr. and Ostrik \cite{kirillov-ostrik}, and the construction is much closer in spirit to the original McKay correspondence. (In particular, the vertices of the graph $\Gamma(G)$ are the isomorphism classes of the simple objects in a category analogous to $\Rep (G)$, and the edges are defined via analogous $\Hom$ spaces.) 

Let $q=e^{\frac{\pi i}{h}}$ be a root of unity, and let $\C$ denote the semisimple quotient of the category of representations of $U_{q} (\s_{2})$. $\C$ is a fusion category with finitely many simple objects $V_{0} = \I , V_{1}, \ldots, V_{h-2}$.

Recall the fusion rule for $\C$. It is given by 
\begin{equation}\label{e:fusionrule}
V_{n} \otimes V_{m} = \bigoplus_{k = | n-m |}^{N_{n,m}} V_{k} 
\end{equation}
where $N_{n,m}= \text{min} \{ n+m , 2(h-2)-(n+m) \}$ and $k+n+m \in 2\Z$.

\begin{defi} 
An algebra in $\C$ is an object $A \in \C$ with maps $\I \to A$, $A \otimes A \to A$ satisfying the obvious axioms (which are the natural generalizations of the axioms of an algebra with unit).
\end{defi}

\begin{defi}
An $A$-module in $\C$ is a pair $(M, \mu_{M})$, where $M \in \C$ and $\mu_{M} : A \otimes M \to M$, which satisfies the obvious axioms (which are the natural generalizations of the axioms of a module over an algebra). 
\end{defi}

Define $\Hom (M_{1} , M_{2})$ in the obvious way and denote by $\Rep (A)$ the category of modules over $A$.

There is a tensor product $$M_{1} \otimes_{A} M_{2} = M_{1} \otimes_{\C} M_{2} / \text{Im}(\mu_{1} -\mu_{2}),$$ where $\mu_{i} : A \otimes_{\C} M_{1} \otimes_{\C} M_{2} \to M_{1} \otimes_{\C} M_{2}$ are given by $\mu_{1} = \mu_{M_{1}} \otimes id$ and $\mu_{2} = (\id_{M_{1}} \otimes \mu_{M_{2}}) R_{A M_{1}}$  and $R$ is the braiding in $\C$.

There are tensor functors $F: \C \to \Rep(A)$ given by $F(X) = A\otimes_{\C} X$ and $\mu_{F(X)} = \mu \otimes id$, and $G: \Rep (A) \to \C$, the forgetful functor. These functors are exact and adjoint to each other.

The functor $F$ makes $\Rep(A)$ a module category over $\C$.

Given an algebra $A$, construct a graph $\Gamma (A)$ whose vertices are the isomorphism classes of simple objects $[X_{i}]$ in $\Rep (A)$, with adjacency matrix given by $n_{ij} = \dim \Hom (X_{i} , F(V_{1}) \otimes_{A} X_{j})$.

\begin{thm}\cite{kirillov-ostrik}\label{t:qmckay}
The map $A \mapsto \Gamma (A)$ gives a bijection between a certain class of $\C$-algebras (rigid, commutative algebras with $\theta_{A} = id$, see \cite{kirillov-ostrik} for details) and finite Dynkin diagrams of type $A_{n}, D_{2n}, E_{6}, E_{8}$.\\
\end{thm}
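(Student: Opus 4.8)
The plan is to reduce the classification to a spectral problem about the \emph{fusion graph} $\Gamma(A)$ and then to refine it using commutativity and the balancing condition $\theta_{A} = \id$. Since $A$ is rigid, $\Rep(A)$ is a finite semisimple category, so for connected $A$ the graph $\Gamma(A)$ is a finite connected graph; and since $V_{1}$ is self-dual in $\C$ and $F$, $G$ are biadjoint, rigidity together with semisimplicity gives $n_{ij} = \dim\Hom(X_{i}, F(V_{1})\otimes_{A} X_{j}) = \dim\Hom(X_{j}, F(V_{1})\otimes_{A} X_{i}) = n_{ji}$, so $\Gamma(A)$ is undirected. The central object is the integer matrix $N = (n_{ij})$, which is exactly the matrix of the action of $[F(V_{1})]$ on the Grothendieck group $K(\Rep(A))$, viewed as a module over the fusion ring $K(\C)$.

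First I would pin down the spectrum of $N$. The fusion rule~\eqref{e:fusionrule} gives $V_{1}\otimes V_{n} = V_{n-1}\oplus V_{n+1}$ with the obvious truncation at the two ends, so in $K(\C)$ the class $[V_{1}]$ acts as the adjacency matrix of the type $A_{h-1}$ graph, whose eigenvalues are $2\cos(\pi j/h)$ for $j = 1,\dots,h-1$. Because $K(\Rep(A))$ is a module over the commutative ring $K(\C)$ and $[V_{1}]$ generates $K(\C)$, every eigenvalue of $N$ must be one of these numbers; in particular the Frobenius--Perron eigenvalue of $N$ satisfies $\|N\| \le 2\cos(\pi/h) < 2$. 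I would then invoke the classical fact that a connected graph has adjacency spectral radius strictly less than $2$ precisely when it is a simply-laced finite Dynkin diagram, and that the top eigenvalue of such a diagram is $2\cos(\pi/h')$ with $h'$ its Coxeter number. Matching forces $h' = h$, so at this stage $\Gamma(A)$ is an $A$, $D$ or $E$ diagram with Coxeter number $h$.

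The hard part is cutting this list down to $A_{n}, D_{2n}, E_{6}, E_{8}$ using commutativity and $\theta_{A} = \id$, and this is where the real content lies. For commutative $A$ the product $\otimes_{A}$ makes $\Rep(A)$ monoidal with unit the distinguished vertex $A \in \Gamma(A)$, and as an object of $\C$ one has $G(A) = \bigoplus_{i} Z_{i}\,V_{i}$ for non-negative integers $Z_{i}$. Commutativity together with $\theta_{A} = \id$ forces each $V_{i}$ occurring in $A$ to have trivial twist (a divisibility constraint on $i$ modulo $h$) and forces $Z$ to satisfy the consistency (``type~I'') conditions of a physical modular invariant. I would check these conditions against the explicit algebras $A = \I$ (giving $A_{n}$), $A = V_{0}\oplus V_{h-2}$, which admits a commutative structure with $\theta_{A}=\id$ exactly when $h\equiv 2 \bmod 4$ (giving $D_{2n}$, the failure of this twist condition for $h\equiv 0 \bmod 4$ being precisely what excludes $D_{2n+1}$), and the two exceptional objects $A = V_{0}\oplus V_{6}$ at $h=12$ and $A = V_{0}\oplus V_{10}\oplus V_{18}\oplus V_{28}$ at $h=30$ (giving $E_{6}$ and $E_{8}$), while verifying that no commutative structure with $\theta_{A}=\id$ produces $E_{7}$. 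I expect the construction of the associative commutative products on the two exceptional objects, together with the nonexistence argument for $E_{7}$, to be the main obstacle: these are genuinely case-by-case computations with the braiding and associativity constraints of $\C$, essentially reproducing the Cappelli--Itzykson--Zuber list of $\widehat{\s}_{2}$ modular invariants of type~I.

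Finally I would upgrade surjectivity to a bijection. Surjectivity is the explicit list above. For injectivity I would recover $A$ from $\Gamma(A)$: the graph determines the module category $\Rep(A)$, hence the multiplicity vector $Z$, hence the underlying object $G(A) \in \C$; and the rigidity, commutativity and $\theta_{A}=\id$ hypotheses rigidify the algebra structure on this object up to isomorphism, so distinct algebras yield distinct diagrams.
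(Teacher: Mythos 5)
First, a point of comparison you could not have known from the statement alone: the paper does not prove this theorem at all. It is imported wholesale from \cite{kirillov-ostrik} and used as a black box; the correspondence $A \mapsto \Gamma(A)$ enters the rest of the paper only through its output (the graph $\Gamma$, the simples $X_{i}$ of $\Rep(A)$, and the rule $V_{1} \otimes X_{i} = \bigoplus_{j-i} X_{j}$). So the only meaningful comparison is with the proof in the cited source, and there your outline does follow the same route: $K(\Rep(A))$ is a module over the fusion ring $K(\C) \cong \Z[x]/(P_{h-1}(x))$, where $P_{h-1}$ is the Chebyshev-type polynomial with roots $2\cos(\pi j/h)$, so the spectrum of the adjacency matrix is constrained and the graph must be a finite simply-laced diagram; the balancing condition $\theta_{A}=\id$ together with explicit constructions then cuts the list down to $A_{n}, D_{2n}, E_{6}, E_{8}$, in parallel with the type I $\widehat{\s}_{2}$ modular invariants.

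As a proof, however, your text has genuine gaps beyond the (honestly acknowledged) deferral of the exceptional existence and $E_{7}$ non-existence computations, which are the bulk of the work. (a) ``Connected graph with spectral radius $<2$ iff ADE'' is false if loops are allowed: the tadpole graphs also have norm $<2$, and nothing in your first step rules out $n_{ii} \neq 0$. You must first show $A$ is supported on the $V_{i}$ with $i$ even --- which your own twist observation supplies, since $\theta_{V_{i}} \neq 1$ for odd $i$ --- so that $\Rep(A)$ inherits the $\Z_{2}$-grading of $\C$, the graph $\Gamma(A)$ is bipartite and loop-free, and only then does the Smith classification apply. Note this means the twist hypothesis is needed \emph{before} your spectral step, not merely after it. (b) ``Matching forces $h'=h$'': an inequality $\|N\| \le 2\cos(\pi/h)$ does not force equality, so a Dynkin diagram with smaller Coxeter number is not yet excluded; you need a strictly positive eigenvector of $N$ with eigenvalue exactly $2\cos(\pi/h)$ --- the quantum dimensions $(\dim_{q} X_{i})$ furnish one, and Perron--Frobenius then identifies the norm. (c) Injectivity is not a formality: ``the graph determines the module category'' is itself a classification theorem (Ostrik's classification of module categories over $\C$), and even once the underlying object $G(A) \in \C$ is known, uniqueness of the rigid commutative algebra structure with $\theta_{A}=\id$ on it is a separate argument in \cite{kirillov-ostrik}, not a consequence of rigidity in the informal sense. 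With (a)--(c) repaired and the case-by-case constructions carried out, your plan is essentially the proof in the cited reference.
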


\begin{example}
Consider the case $q=e^{\frac{\pi i}{h}}$, and the trivial algebra $A= \I$. Then $\Rep(A) = \C$, so the simple objects are just $V_{0}, V_{1}, \ldots , V_{h-2}$.  A simple calculation of $V_{k} \otimes V_{1}$ in $\C$ shows that the corresponding graph is of type $A_{h-1}$.
\end{example}

Note that in this construction $h$ varies as the Coxeter number of the corresponding root system, and there may be more than one algebra for the same root of unity. For example, if $q=e^{\frac{\pi i }{10}}$ there are two algebras in $\C$; one corresponding to $A_{9}$ and one corresponding to $D_{6}$.

For the rest of this paper we will abuse language, and notation, and call such algebras in $\C$ subgroups and denote them by $G$. We will denote by $\Mod_{G}$ the corresponding module category of representations of G.

\begin{defi}
Let $S$ be an algebra in $\C$.  A $G$-equivariant $S$-module is an object $M\in \Mod_{G}$, such that $F$ gives $M$ the structure of an $S$-module. More precisely, there are maps of $G$-modules $F(S) \otimes M \to M$, satisfying the axioms of a module. 
\end{defi}

\begin{defi}
A dg-algebra $S$ in $\C$ is a complex of objects $S = \oplus S^{i}$ ($S^{i} \in \C$), together with a graded map $S \otimes S \to S$ (i.e. $S^{i} \otimes S^{j} \to S^{i+j}$) and inclusion $\I \to S^{0}$, satisfying the analogues of the axioms of a dg-algebra. (I.e one should replace vector spaces with objects in $\C$, and linear maps with morphisms, then ask for commutative diagrams that are the analogues of the commutative diagrams that encode the structure of a differential graded algebra.) In particular, $S$ should be a graded, associative algebra in $\C$, and the differential on $S$ should act as a derivation with respect to multiplication: $d_{S} (xy) = (d_{S} x)y + (-1)^{|x|} x (d_{S} y)$.
\end{defi}
\begin{defi}
A dg-module $M$ for a dg-algebra $S$, is a complex of objects $M= \oplus M^{i}$, together with maps $S^{j} \otimes M^{i} \to M^{i+j}$, satisfying the obvious analogue of the axioms of a dg-module over a dg-algebra. (Again, replace vector spaces with objects in $\C$, and linear maps with morphisms, then require the appropriate diagrams to commute.) In particular, $M$ is a graded module for $S$, and there is the usual compatibility between differentials: $d_{M} (s.m) = (d_{S} s).m + (-1)^{|s|} s. d_{M} m$.
\end{defi}

\begin{defi}
Let $S$ be a dg-algebra in $\C$. A $G$-equivariant dg $S$-module is a complex of objects $M = \oplus M^{i}$ in $\Mod_{G}$, such that $F$ gives $M$ the structure of a dg $S$-module. More precisely, there are maps $F(S^{i}) \otimes_{G} M^{j} \to M^{i+j}$, satisfying axioms analogous to those of a dg-module. (Recall that $F$ is a tensor functor, so that $F(S^{i}) \otimes_{G} F(S^{j}) = F(S^{i} \otimes_{\C} S^{j})$.) Denote the category of $G$-equivariant dg $S$-modules by $S-\Mod_{G}$.
\end{defi}

We will also consider graded versions of these, where in addition to the homological grading $S= \bigoplus S^{i}$, $M= \bigoplus M^{i}$, there is an additional homogeneous grading: $S^{i} = \bigoplus \bigoplus S^{i}_{j}$, $M^{i}= \bigoplus \bigoplus M^{i}_{j}$. In this case, all the maps must also respect the homogeneous grading: $S^{i}_{k} \otimes S^{j}_{l} \to S^{i+j}_{k+l}$, $S^{i}_{k} \otimes M^{j}_{l} \to M^{i+j}_{k+l}$.

Though we have not specified any particular homological, or homogeneous gradings in our definitions, for the rest of the paper we will consider the case of $\Z_{2}$-homological grading, and $\Z_{2h}$-homogenous grading: 
$$S=\bigoplus_{i \in \Z_{2}} \bigoplus_{ j \in \Z_{2h}} S^{i}_{j}$$ 
$$M=\bigoplus_{i \in \Z_{2}} \bigoplus_{ j \in \Z_{2h}} M^{i}_{j}.$$

\section{Quantum Projective Line}\label{s:qproj}

There are several points of view about how to define a non-commutative space. We shall take the categorical approach described in the introduction. Namely, we introduce an analogue $S_{q}$ of the symmetric algebra, and define an analogue $\OO_{q}$ of the structure sheaf. It turns out that the natural candidate for $\OO_{q}$ is a complex of $S_{q}$-modules. Moreover, it has the extra structure of a dg-algebra. However, instead of considering the ``Proj" category used in \cite{rosen}, we take the point of view introduced at the end of Section~\ref{s:sheaves}, and study a triangulated subcategory of $\OO_{q}$ dg-modules. To begin, we define the algebra $S_{q}$. 

Recall that in the classical setting the symmetric algebra can be realized as the direct sum of the irreducible representations of $\text{SU}(2)$. Taking this as motivation, we define the algebra $S_{q}$ as follows:
$$S_{q} = \bigoplus_{k \in \Z_{2h}} V_{k}$$
where we set $V_{k} = 0$ for $h-1\leq k \leq 2h-1$.

Note that $S_{q}$ comes with a (homogeneous) $\Z_{2h}$ grading given by $(S_{q})_{n} = V_{n}$. 

\begin{remark}
The reason for choosing a $\Z_{2h}$ grading is as follows. In the classical case, the functor given by tensoring with $\OO (2)$ gives a Coxeter element in the corresponding root system. In the quantum case the root system is finite, with Coxeter number $h$, so for tensoring by $\OO_{q} (2)$ to give a Coxeter element we need $\OO_{q} (2h) = \OO_{q}$.
\end{remark}

Define multiplication by the projection map $V_{n} \otimes V_{m} \to V_{n+m}$. Then $S_{q}$ is a graded algebra with unit given by $\I \hookrightarrow S_{q}$. It is generated in degree 1 by $V_{1}$.

For $n \in \Z_{2h}$ define $S_{q} (n)$ by setting $S_{q}(n)_{k} = V_{n+k}$.

Recall that in $\C$ we have $V_{h-2} \otimes V_{n} = V_{h-2-n}$ for $0\leq n \leq h-2$, and $V_{h-2} \otimes V_{n} = 0$ for $h-1 \leq n \leq 2h-1$.

Note that the algebra $S_{q}$ does not satisfy the analogue of the triangle ~\ref{e:sseq}, because in degree $h-2$ we get $0\to V_{h-2} \to 0 \to 0 \to 0$, which is not a short exact sequence and hence does not give a triangle in the corresponding derived category.

We now define a $\Z_{2}$-graded complex $\OO_{q}$, which should be thought of as an analogue of the structure sheaf $\OO$. In particular it gives an exact triangle analogous to the one coming from the short exact sequence $0\to \OO \to \OO(1) \otimes V \to \OO (2) \to 0$.

Consider $S_{q}$ and $S_{q}(h) \otimes V_{h-2}$ as complexes concentrated in degree zero. They are both naturally $\Z_{2h}$-graded $S_{q}$-modules. Set $\OO_{q} = S_{q} \oplus (S_{q} (h) \otimes V_{h-2}) [1]$, considered as a complex with trivial differential and $\Z_{2}$ homological grading.

\begin{remark}
The reason for considering $\Z_{2}$-graded complexes also comes from the corresponding root system. We would like the shift functor $[1]$ to act by longest element in the corresponding Weyl group, so we need $[2]$ to be the identity.
\end{remark}

Let $\D ( \C)$ be the derived category of $\C = \Rep (U_{q} (\s_{2}) )$, and let $\D (\C) / T^{2}$ be the corresponding 2-periodic derived category. We shall view $\OO_{q}$ as an object in $\D (\C) / T^{2}$.

\begin{prop}\label{p:triangle}
There is a triangle in $\D (\C) / T^{2}$ of the form
\begin{equation}\label{e:O}
\OO_{q} \to \OO_{q} (1) \otimes V_{1} \to \OO_{q}(2) \to \OO_{q} [1].
\end{equation}
Applying the twist functor $(n)$ gives triangles of the form
\begin{equation}\label{e:O(n)}
\OO_{q}(n) \to \OO_{q} (n+1) \otimes V_{1} \to \OO_{q}(n+2) \to \OO_{q} [1].
\end{equation}
\end{prop}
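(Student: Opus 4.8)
My plan is to exhibit the first triangle (\ref{e:O}) explicitly as the cone of a natural morphism in $\D(\C)/T^{2}$, and then deduce the twisted family (\ref{e:O(n)}) by applying the twist functor $(n)$, which is an exact autoequivalence and hence carries distinguished triangles to distinguished triangles. Thus everything reduces to producing a morphism $f\colon \OO_{q}\to \OO_{q}(1)\otimes V_{1}$ and identifying $\mathrm{Cone}(f)$ with $\OO_{q}(2)$.

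For the morphism, I would write $\OO_{q}=S_{q}\oplus (S_{q}(h)\otimes V_{h-2})[1]$ and define $f$ summand-wise from the map $\delta_{n}\colon V_{n}\to V_{n+1}\otimes V_{1}$ dual to the multiplication $V_{n+1}\otimes V_{1}\to V_{n+2}$, that is, the inclusion of the $V_{n}$-isotypic component. On $S_{q}$ this gives $S_{q}\to S_{q}(1)\otimes V_{1}$ and on the correction term the analogous map twisted by $(h)\otimes V_{h-2}$. Because $\C$ is semisimple and $V_{n+1}\otimes V_{1}\cong V_{n}\oplus V_{n+2}$ by (\ref{e:fusionrule}), these are the only maps available up to scalar, so $f$ is forced and is block-diagonal with respect to the two homological summands. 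Since the internal differentials vanish, $\mathrm{Cone}(f)$ can be computed degree-by-degree in the $\Z_{2h}$ grading, and I would identify its cohomology with $\OO_{q}(2)$ one homogeneous degree at a time using the fusion rule.

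The verification splits into a bulk case and the boundary. In the bulk degrees ($0\le n\le h-4$) the correction term vanishes and $\delta_{n}$ is the genuine inclusion $V_{n}\hookrightarrow V_{n}\oplus V_{n+2}$, so $\mathrm{Cone}(f)_{n}\simeq \mathrm{coker}(\delta_{n})=V_{n+2}=(\OO_{q}(2))_{n}$; this is exactly the quantum Euler sequence $0\to V_{n}\to V_{n+1}\otimes V_{1}\to V_{n+2}\to 0$. Near $n=h-2$ the sequence degenerates, since $V_{h-1}=0$ truncates the fusion, and a direct computation gives $(\OO_{q})_{h-2}=V_{h-2}$, $(\OO_{q}(1)\otimes V_{1})_{h-2}=0$, and $(\OO_{q}(2))_{h-2}=V_{h-2}[1]$, so the triangle in this degree is $V_{h-2}\to 0\to V_{h-2}[1]\to V_{h-2}[1]$ and the correction summand supplies precisely the cohomology the naive sequence loses.

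The step I expect to be delicate is the shift bookkeeping at the wrap-around degrees near $n\equiv 2h-2$, where the $\Z_{2h}$ grading forces $(\OO_{q}(2))_{n}=(\OO_{q})_{n+2}$ to reach back to the bottom of $S_{q}$. There the correction summand contributes through $B_{2h-2}=V_{h-2}\otimes V_{h-2}\cong \I$, and the cone of its $\delta$-map lands a priori in homological degree $1$; matching it with the homological degree-$0$ term of $\OO_{q}(2)$ is only possible after using the $2$-periodicity $[2]=\mathrm{id}$ in $\D(\C)/T^{2}$. This is the crux: the two homological summands of $\OO_{q}$ are interchanged by the shift at the boundary, so the identification $\mathrm{Cone}(f)\cong\OO_{q}(2)$ genuinely requires passing to the $2$-periodic category rather than holding in $\D(\C)$, and one must check that the resulting connecting morphisms are the isomorphisms forced by these degree computations.
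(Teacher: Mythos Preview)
Your proposal is correct and follows essentially the same route as the paper: both arguments verify the triangle one homogeneous degree at a time using the fusion rule $V_{n+1}\otimes V_{1}\cong V_{n}\oplus V_{n+2}$, handle the degenerations at $k=h-2$ and $k=2h-2$ where the $2$-periodicity $[2]=\id$ is genuinely needed, and then obtain the twisted family by exactness of $(n)$. The paper is slightly more explicit in that it also writes out the intermediate degrees $k=h-3,\,h-1,\,h\le k\le 2h-4,\,2h-3,\,2h-1$ separately (the correction summand in homological degree~$1$), whereas you absorb these into ``the analogous map twisted by $(h)\otimes V_{h-2}$''; this is fine, since the computation there is symmetric to the bulk case.
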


\begin{proof}
We will look at Equation~\ref{e:O} degree by degree. (The reader may wish to refer to Example~\ref{e:A4}.)\\

For $0\leq k \leq h-4$ the complexes involved are all concentrated in degree $0$. Since $V_{k+1} \otimes V_{1} = V_{k+2} \oplus V_{k}$ for $k$ in this range, we get short exact sequences in $\C$ of the form
$$0 \to V_{k} \to V_{k+2} \otimes V_{1} \to V_{k+2} \to 0,$$
and hence a triangle in degree $k$.

For $k=h-3$ we have that $V_{h-2} \otimes V_{1} = V_{h-3}$, so again we have a short exact sequence
$$0 \to V_{h-3} \to V_{h-2} \otimes V_{1} \to 0 \to 0,$$
which gives a triangle in degree $k$.

For $k=h-2$ we have
$$ V_{h-2} \to 0 \to V_{h-2}[1] \to V_{h-2} [1]$$
which defines a triangle in degree $k$. (Note that this explains the necessity of complexes to define $\OO_{q}$. In $\C$ the third term is $0$, and hence would not define a short exact sequence.)

For $k=h-1$, the complex is concentrated in homological degree 1, the triangle comes from the short exact sequence $0\to 0 \to V_{h-2} \otimes V_{1} \to V_{h-3} \to 0$, since $V_{h-2} \otimes V_{1} = V_{h-3}$.

For $h\leq k \leq 2h-4$ the complexes are concentrated in degree 1, and the triangles come from short exact sequences of the form
$$0 \to V_{h-2-k+1} \to V_{h-2-k} \otimes V_{1} \to V_{h-2-k-1} \to 0.$$

For $k=2h-3$ the complex is concentrated in degree 1, and the triangle comes from the short exact sequence
$$0\to V_{1} \to \I \otimes V_{1} \to 0 \to 0.$$

For $k=2h-2$ the triangle is given by
$$\I [1] \to 0 \to \I \to \I.$$

For $k=2h-1$ the complex is concentrated in degree 0, and the triangle comes from the short exact sequence
$$0\to 0 \to \I \otimes V_{1} \to V_{1} \to 0.$$

For Equation~\ref{e:O(n)}, apply the twist by $n$ to the triangle in Equation~\ref{e:O}. Since twisting is an exact functor, it takes triangles to triangles.
\end{proof}

\begin{example}\label{e:A4}
Let $h=5$, so we have non-zero simple objects $V_{0}, V_{1}, V_{2}, V_{3}$.

The triangle $\OO_{q} \to \OO_{q} (1) \otimes V_{1} \to \OO_{q}(2) \to \OO_{q} [1]$ is given below. The rows correspond to homogeneous degree, while the columns correspond to homological degree.

\begin{equation*}
\begin{bmatrix}
\I & 0 \\
V_{1} & 0 \\
V_{2} & 0 \\
V_{3} & 0 \\
0 & 0 \\
0 & V_{3} \\
0 & V_{2} \\
0 & V_{1} \\
0 & \I \\
0 & 0 
\end{bmatrix} 
\to
\begin{bmatrix}
V_{1} \otimes V_{1} & 0 \\
V_{2} \otimes V_{1} & 0 \\
V_{3} \otimes V_{1} & 0 \\
0 & 0 \\
0 & V_{3} \otimes V_{1} \\
0 & V_{2} \otimes V_{1} \\
0 & V_{1}\otimes V_{1} \\
0 & \I \otimes V_{1} \\
0 & 0 \\
\I \otimes V_{1} & 0 
\end{bmatrix} 
\to
\begin{bmatrix}
V_{2} & 0 \\
V_{3} & 0 \\
0 & 0 \\
0 & V_{3} \\
0 & V_{2} \\
0 & V_{1} \\
0 & \I \\
0 & 0 \\
\I & 0 \\
V_{1} & 0 
\end{bmatrix} 
\to
\begin{bmatrix}
0 & \I \\
0 & V_{1} \\
0 & V_{2} \\
0 & V_{3} \\
0 & 0 \\
V_{3} & 0 \\
V_{2} & 0 \\
V_{1} & 0 \\
\I  & 0\\
0 & 0 
\end{bmatrix} 
\end{equation*}
\end{example}

\begin{prop}
The complex $\OO_{	q}$ has the natural structure of a dg-algebra in $\C$ (with $\Z_{2}$ homological grading, and $\Z_{2h}$ homogeneous grading).
\end{prop}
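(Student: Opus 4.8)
The plan is to build the multiplication $\OO_{q} \otimes \OO_{q} \to \OO_{q}$ one homological component at a time. Write $\OO_{q}^{0} = S_{q}$ and $\OO_{q}^{1} = S_{q}(h) \otimes V_{h-2}$, so that in homogeneous degree $k$ one has $(\OO_{q}^{0})_{k} = V_{k}$ and $(\OO_{q}^{1})_{k} = V_{h+k} \otimes V_{h-2}$. Define $m_{00} \colon \OO_{q}^{0} \otimes \OO_{q}^{0} \to \OO_{q}^{0}$ to be the multiplication already constructed on $S_{q}$ (the projection $V_{n} \otimes V_{m} \to V_{n+m}$), and let the two mixed maps $m_{01}, m_{10}$ be the maps giving $\OO_{q}^{1} = S_{q}(h) \otimes V_{h-2}$ its evident structure of $S_{q}$-module through the $S_{q}(h)$-factor. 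The unit is the inclusion $\I \hookrightarrow V_{0} \subset S_{q} = \OO_{q}^{0}$. Since $\OO_{q}$ carries the zero differential, the Leibniz identity $d(xy) = (dx)y + (-1)^{|x|} x(dy)$ holds trivially, so the whole content of the statement is the construction of an associative unital graded product.

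The essential new ingredient is the map $m_{11} \colon \OO_{q}^{1} \otimes \OO_{q}^{1} \to \OO_{q}^{0}$, which lands in homological degree $1+1 \equiv 0$. Here I would use the fact that $V_{h-2}$ is an invertible object of $\C$, with a canonical isomorphism $V_{h-2} \otimes V_{h-2} \xrightarrow{\sim} \I$. Degree by degree, I define $m_{11}$ as the composite
\[
(V_{h+k} \otimes V_{h-2}) \otimes (V_{h+l} \otimes V_{h-2}) \xrightarrow{R} (V_{h+k} \otimes V_{h+l}) \otimes (V_{h-2} \otimes V_{h-2}) \longrightarrow V_{k+l} \otimes \I = V_{k+l},
\]
where the braiding $R$ of $\C$ transposes the two inner factors, the $S_{q}$-multiplication sends $V_{h+k} \otimes V_{h+l} \to V_{2h+k+l} = V_{k+l}$, and the last arrow is the invertibility isomorphism. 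One should check this lands in the correct graded piece $(\OO_{q}^{0})_{k+l} = V_{k+l}$, which it does because the indices add under multiplication while $V_{h-2}^{\otimes 2} \cong \I$ contributes homogeneous degree $0$.

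With the four components in hand, associativity is verified by sorting the three arguments according to their homological degrees $(a,b,c) \in \Z_{2}^{3}$. The case $(0,0,0)$ is the associativity of $S_{q}$, and the three cases with a single degree-one argument reduce to the $S_{q}$-module axioms for $\OO_{q}^{1}$. The remaining cases, with two or three degree-one arguments, all pass through $m_{11}$ and constitute the real computation: associativity there amounts to the statement that the $S_{q}$-product, the braiding $R$, and the isomorphism $V_{h-2}^{\otimes 2} \cong \I$ assemble coherently. This I would deduce from associativity and commutativity of $S_{q}$ together with the hexagon axioms and naturality of $R$.

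The main obstacle is exactly this coherence check: one must confirm that the braidings inserted by $m_{11}$ cancel correctly when a product of three degree-one elements is reassociated, so that no residual self-braiding or twist of $V_{h-2}$ obstructs associativity. Once this bookkeeping is done, the same argument shows the product is graded-commutative, completing the verification that $\OO_{q}$ is a dg-algebra in $\C$.
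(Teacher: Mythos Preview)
Your proposal is correct and follows the same route as the paper: build the product componentwise from the projections $V_n \otimes V_m \to V_{n+m}$ and note that the zero differential makes the Leibniz rule automatic. The paper's argument is far terser than yours---it writes the multiplication in a single line as $V_n^k \otimes V_m^l \to V_{n+m}^{k+l}$ (with $V_n^1$ standing for the summand of $\OO_q^1$ in homogeneous degree $h+n$), checks the grading arithmetic $(h+n)+(h+m)\equiv n+m \pmod{2h}$, and stops; it neither isolates the braiding move needed to pair the two copies of $V_{h-2}$ nor spells out the associativity verification you flag as the main obstacle. So your write-up is effectively a more careful unpacking of the same construction, with the added virtue of naming where the actual content lies.
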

\begin{proof}
Define a multiplication on $\OO_{q}$ by $V_{n}^{k} \otimes V_{m}^{l} \to V_{n+m}^{k+l}$, where by $V_{n}^{k}$ we mean the object $V_{n}$ in homological degree $k$. Note that this respects the $\Z_{2h}$ grading. To see this note that $V_{n}^{1}$ sits  in $\OO_{q}$ in homogeneous degree $h+n$, so that $(h+n) + (h+m) = 2h + (n+m) = n+m \ (\text{mod } 2h)$ and $V_{n}^{1} \otimes V_{m}^{1} \to V_{n+m}^{0}$.

The differential is trivial, so it follows easily that $\OO_{q}$ is a dg-algebra.
\end{proof}

Let $\OO_{q}-\Mod$ be the category of $\Z_{2h}$-graded dg $\OO_{q}$-modules in $\C$, and $\D (\OO_{q})$ be its derived category.

\begin{defi}
Let $\D (\PP^{1}_{q}) \subset \D (\OO_{q})$ be the full subcategory of objects $M$, such that $M \to M(1) \otimes V_{1} \to M(2) \to M[1]$ is a triangle, where the maps come from Equation~\ref{e:O}. We call $\D ( \PP^{1}_{q})$ the derived category of coherent sheaves on $\PP^{1}_{q}$.
\end{defi}

\begin{remark}
Note that the homogeneous $\Z_{2h}$ grading forces us to consider an alternative definition for sheaves, since the usual equivalence on $S$-modules of ``for sufficiently high degree" makes no sense in this setting. 
\end{remark}

\begin{prop}
For all $n\in \Z_{2h}$, $\OO_{q} (n) \in \D (\PP^{1}_{q})$.
\end{prop}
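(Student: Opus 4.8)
The plan is to leverage Proposition~\ref{p:triangle} directly. That proposition already establishes the existence of the triangle
\begin{equation*}
\OO_{q}(n) \to \OO_{q}(n+1) \otimes V_{1} \to \OO_{q}(n+2) \to \OO_{q}(n)[1]
\end{equation*}
in $\D(\C)/T^{2}$ for every $n \in \Z_{2h}$ (Equation~\ref{e:O(n)}). So the content of the present proposition is not to produce a new triangle, but to check that $\OO_{q}(n)$, regarded now as an object of $\D(\OO_{q})$ rather than merely of $\D(\C)/T^{2}$, genuinely lies in the full subcategory $\D(\PP^{1}_{q})$. The membership condition is exactly that $M \to M(1) \otimes V_{1} \to M(2) \to M[1]$ be a triangle \emph{with the maps coming from Equation~\ref{e:O}}. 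Thus the real work is (i) to confirm that $\OO_{q}(n)$ is a bona fide $\Z_{2h}$-graded dg $\OO_{q}$-module, and (ii) to confirm that the triangle of Proposition~\ref{p:triangle} is precisely the one whose maps are induced by the $\OO_{q}$-module structure maps of Equation~\ref{e:O}, so that it qualifies as the defining triangle for membership in $\D(\PP^{1}_{q})$.

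First I would verify (i): since $\OO_{q}$ is a dg-algebra (by the preceding proposition), the twist $\OO_{q}(n)$ is a free rank-one $\OO_{q}$-module, with its module structure given by the same multiplication maps $V^{k}_{a} \otimes V^{l}_{b} \to V^{k+l}_{a+b}$ shifted in homogeneous degree by $n$; this is automatically a dg $\OO_{q}$-module and lands in $\D(\OO_{q})$. Next I would address (ii), the heart of the matter. The maps in Equation~\ref{e:O} are the module-multiplication maps by $V_{1}$ together with the connecting map, and twisting is an exact functor of $\OO_{q}$-modules, so applying $(n)$ to the defining triangle for $\OO_{q}$ produces exactly the triangle Equation~\ref{e:O(n)} \emph{with its arrows realized as $\OO_{q}$-module maps}. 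The point to nail down is that these module-theoretic maps agree, under the forgetful functor $\D(\OO_{q}) \to \D(\C)/T^{2}$, with the maps of Proposition~\ref{p:triangle} that were checked degree-by-degree; this is a compatibility statement between the two constructions rather than a fresh computation.

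The step I expect to be the main obstacle is the bookkeeping around the $\Z_{2}$ homological versus $\Z_{2h}$ homogeneous gradings, specifically ensuring that the twist by $n$ of the connecting morphism $\OO_{q}(2) \to \OO_{q}[1]$ correctly lands as $\OO_{q}(n+2) \to \OO_{q}(n)[1]$ and that the degree-shift identity $(h+n)+(h+m) \equiv n+m \pmod{2h}$ used in the dg-algebra proof is being applied consistently when $n \neq 0$. One must check, as the degree-by-degree analysis of Proposition~\ref{p:triangle} effectively does, that no homogeneous degree yields a non-exact sequence once the overall shift by $n$ is incorporated; because the proposition was proved for Equation~\ref{e:O} and then extended by the exactness of the twist, this is already handled, and the remaining task is only to observe that the subcategory membership condition is phrased in terms of precisely that twisted triangle. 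I would therefore conclude by invoking exactness of $(n)$ on $\OO_{q}$-modules to transport the verified triangle, making the membership immediate; the proof is essentially a matter of correctly identifying the maps rather than overcoming a genuine analytic difficulty.
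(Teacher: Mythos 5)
Your proposal is correct and takes essentially the same route as the paper: the paper's entire proof is the observation that the statement "is just Proposition~\ref{p:triangle} rephrased," i.e., the triangles of Equation~\ref{e:O(n)} (obtained by twisting Equation~\ref{e:O} by the exact functor $(n)$) are exactly the membership condition for $\D(\PP^{1}_{q})$. Your extra verifications --- that $\OO_{q}(n)$ is a dg $\OO_{q}$-module and that the arrows in the twisted triangle are the module-structure maps required by the definition --- are the correct (if implicit in the paper) content of that rephrasing.
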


\begin{proof}
This is just Proposition~\ref{p:triangle} rephrased.
\end{proof}
\section{Equivariant Sheaves on $\PP^{1}_{q}$}\label{s:qsheaves}

In this section we introduce the notion of $G$-equivariant sheaf on $\PP^{1}_{q}$, where $G$ is a finite subgroup of $U_{q} (\s_{2})$.

Recall that a $G$-equivariant sheaf on classical $\PP^{1}$ can be though of as an equivariant $S$-module. A G-equivariant $S$-module is a graded $S$-module $M = \oplus M_{k}$, with a $G$ action preserving the grading, such that the action map $S \otimes M \to M$ is a map of $G$ modules. In particular, each $M_{k}$ is a $G$-module.

Now consider the quantum case. Let $G$ be a finite subgroup of $U_{q} (\s_{2})$. Let $\MM$ denote the corresponding module category over $\C$.

Consider the category of $\Z_{2h}$-graded, $G$-equivariant, $\OO_{q}$ dg-modules. Denote this category by $\OO_{q} - \MM$. Let $\D_{G} (\OO_{q})/T^{2}$ be the corresponding 2-periodic derived category.

An object $M \in \D_{G} (\OO_{q})/T^{2}$ has two gradings. The ``homological" $\Z_{2}$ grading (appearing as superscripts), and the ``homogeneous" $\Z_{2h}$ grading (appearing as subscripts). An object in this category can be thought of as a $\Z_{2}$-graded complex of $G$-modules with a $\Z_{2h}$ grading $M^{\bullet} = \oplus M^{\bullet}_{k}$, together with a dg-map $\OO_{q} \otimes M^{\bullet} \to M^{\bullet}$ that respects the $\Z_{2h}$-gradings of $\OO_{q}$ and $M^{\bullet}$.

We denote by $M^{k}_{j}$ the $j$-th homogeneous component of $M^{\bullet}$ in homological degree $k$.

To make the $\OO_{q}$ action on $M^{\bullet}$ precise, we should really consider the corresponding object $F(\OO_{q})$ which is an honest complex in $\MM$. Recall that $F$ is a tensor functor, so that it makes sense to use $F$ to give an action of $\OO_{q}$. However, we will typically drop the $F$, and just write $\OO_{q}$ for simplicity. Note that $F(\OO_{q})$ has the natural structure of $G$-equivariant $\OO_{q}$-module, with the grading given in the previous section.

Define the ``twist" functor on $\D_{G}(\OO_{q})/T^{2}$ in the usual way: $M(n)_{k} = M_{n+k}$. This functor is exact.

Define the ``global sections functor" $\Gamma : \D_{G} (\OO_{q})/T^{2} \to \MM$ by $\Gamma (M) = H^{0} (M_{0})$.

Define a functor $\widetilde{  } : \MM \to \D_{G} (\OO_{q})/T^{2}$ by setting $\widetilde{X} = \OO_{q} \otimes X$ (with grading given by $\widetilde{X}^{k}_{n} = (\OO_{q})^{k}_{n} \otimes X$).

\begin{lemma}
$\Gamma (\widetilde{X}) = X$.
\end{lemma}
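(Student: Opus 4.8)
The plan is to unwind the definitions and compute directly, since the statement $\Gamma(\widetilde{X}) = X$ should reduce to an explicit identification of a single homogeneous component. First I would recall that by definition $\widetilde{X} = \OO_{q} \otimes X$, with the grading $\widetilde{X}^{k}_{n} = (\OO_{q})^{k}_{n} \otimes X$, and that $\Gamma(M) = H^{0}(M_{0})$, i.e.\ the zeroth cohomology of the homogeneous degree-zero part, taken in the homological $\Z_{2}$-grading. So the whole computation is the claim that $H^{0}\big((\OO_{q} \otimes X)_{0}\big) = X$.

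The key step is to read off $(\OO_{q})_{0}$, the homogeneous degree-zero piece of $\OO_{q}$, as a $\Z_{2}$-graded complex. Recall $\OO_{q} = S_{q} \oplus (S_{q}(h) \otimes V_{h-2})[1]$. In homological degree $0$ the homogeneous degree-zero part is $(S_{q})_{0} = V_{0} = \I$. In homological degree $1$ the homogeneous degree-zero part is $\big(S_{q}(h) \otimes V_{h-2}\big)_{0} = (S_{q})_{h} \otimes V_{h-2} = V_{h} \otimes V_{h-2}$, and since $V_{h} = 0$ in our truncation ($h-1 \le h \le 2h-1$ forces $V_h=0$), this vanishes. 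Hence $(\OO_{q})_{0}$ is the complex concentrated in homological degree $0$ with value $\I$ and trivial differential. Tensoring with $X$ gives $(\OO_{q} \otimes X)_{0} = \I \otimes X = X$ concentrated in degree $0$, so its $H^{0}$ is simply $X$. Taking cohomology of a complex concentrated in a single degree with zero differential returns that object, giving $\Gamma(\widetilde{X}) = X$ as claimed.

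I would then note the one point that requires a moment of care: because we work in the $2$-periodic derived category $\D_{G}(\OO_{q})/T^{2}$, the functor $H^{0}$ must be interpreted within the $\Z_{2}$-homological grading, and one should confirm that no contribution from the odd part sneaks into homogeneous degree $0$ via periodicity or the differential. But since $\OO_{q}$ has trivial differential and, as computed, its odd part vanishes in homogeneous degree $0$, there is nothing to check beyond the fusion-rule truncation $V_{h} = 0$. The only genuine input is therefore the vanishing $V_h = 0$ in $\C$, which is exactly the defining truncation of $S_q$.

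I do not expect a serious obstacle here; this lemma is essentially a bookkeeping verification that the grading conventions are set up so that the structure sheaf has a one-dimensional space of global sections, matching the classical fact $\Gamma(\OO_{\PP^1}) = \CC$ (here $\Gamma(\OO_q) = \I$). The most error-prone part is simply tracking the homological degree shift $[1]$ together with the homogeneous shift $(h)$ in the second summand and confirming they conspire to place the potentially-surviving term $V_h \otimes V_{h-2}$ in homogeneous degree $0$ and homological degree $1$, where it is then killed by the truncation. If one wished to be fully explicit, consulting Example~\ref{e:A4} with $h=5$ confirms the pattern: the homogeneous degree-$0$ row of $\OO_q$ there reads $(\I, 0)$, i.e.\ $\I$ in homological degree $0$ and $0$ in homological degree $1$.
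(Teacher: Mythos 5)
Your proof is correct and is essentially the paper's own argument: the paper's one-line proof, $\Gamma(\widetilde{X}) = H^{0}((\OO_{q}\otimes X)_{0}) = \I \otimes X = X$, is exactly your computation, with your careful verification that the odd summand $(S_{q}(h)\otimes V_{h-2})_{0} = V_{h}\otimes V_{h-2}$ vanishes by the truncation $V_{h}=0$ being the detail the paper leaves implicit. No discrepancies to report.
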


\begin{proof}
By definition $\Gamma (\widetilde{X}_{0}) = H^{0}((\OO_{q} \otimes X)_{0}) = \I \otimes X = X$.
\end{proof}
In analogy with the description of $\D_{G} \subset \D(S-\Mod_{G})$ given in Definition~\ref{d:DG}, we make the following definition.

\begin{defi}
Let $\D_{G} (\PP^{1}_{q}) \subset \D_{G}(\OO_{q})/T^{2}$ be the full triangulated subcategory such that 
\begin{equation}\label{e:fundrel}
 M \to V_{1} \otimes M(1) \to M(2) \to M[1]
\end{equation}
is an exact triangle. (The first map comes from the map $V_{1} \otimes M \to M (1)$ and rigidity, and the second map $V_{1} \otimes M(1) \to M(2)$ is given by the action of $V_{1} \in \OO_{q}$. Here we fix a morphism $V_{1} \otimes V_{1} \to V_{0}$.)
\end{defi}
\section{Objects in $\D_{G}(\PP^{1}_{q})$}\label{s:objects}

Recall that $\MM$ has finitely many simple objects $X_{i}$, indexed by vertices $i \in \Gamma$, where $\Gamma$ is the corresponding $A,D,E$ Dynkin diagram.

For each pair $(i,n) \in \Gamma \times \Z_{2h}$ we can define an object $X_{i} (n) \in \D_{G} ( \PP^{1}_{q})$ by setting $X_{i} (n) = \OO_{q} (n) \otimes X_{i}$. The $\Z_{2h}$ grading is given by $(X_{i} (n))_{k} = \OO_{q} (n)_{k} \otimes X_{i}$. These should be thought of as analogues of the indecomposable locally free sheaves $\OO (n) \otimes X_{i}$ in the classical case.

\begin{prop}
There is a triangle $\OO_{q} \otimes X_{i} \to \OO_{q} (1) \otimes V_{1} \otimes X_{i} \to \OO_{q} (2) \otimes X_{i}$, so that $X_{i} (n)$ is in fact an object in $\D_{G} (\PP^{1}_{q})$.
\end{prop}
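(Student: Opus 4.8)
The plan is to deduce this directly from Proposition~\ref{p:triangle} by tensoring the fundamental triangle for $\OO_{q}$ with the object $X_{i}$. We already have from Equation~\ref{e:O} a triangle
\begin{equation*}
\OO_{q} \to \OO_{q}(1) \otimes V_{1} \to \OO_{q}(2) \to \OO_{q}[1]
\end{equation*}
in $\D(\C)/T^{2}$. Since $X_{i}$ is a fixed object of $\MM$, tensoring on the right by $X_{i}$ is an exact functor on the relevant triangulated category, so it carries triangles to triangles. This immediately produces the desired triangle
\begin{equation*}
\OO_{q} \otimes X_{i} \to \OO_{q}(1) \otimes V_{1} \otimes X_{i} \to \OO_{q}(2) \otimes X_{i} \to (\OO_{q} \otimes X_{i})[1].
\end{equation*}
First I would invoke exactness of $-\otimes X_{i}$ (this follows because $\C$ is semisimple and the functor $F:\C\to\MM$ is exact, as recorded in Section~\ref{s:qmckay}), and then simply rewrite $X_{i}(n) = \OO_{q}(n) \otimes X_{i}$ to read off that the tensored triangle is exactly the defining relation~\ref{e:fundrel} for membership in $\D_{G}(\PP^{1}_{q})$.

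The one point requiring a little care is bookkeeping of the gradings and of the equivariant structure, rather than any genuine obstacle. I would check that the $\Z_{2h}$-homogeneous grading on $\OO_{q}(n) \otimes X_{i}$, defined by $(X_{i}(n))_{k} = \OO_{q}(n)_{k} \otimes X_{i}$, is respected by each of the three morphisms in the triangle, so that the result genuinely lives in the $\Z_{2h}$-graded $G$-equivariant category $\OO_{q}-\MM$ and not merely in $\D(\C)/T^{2}$. Since the maps in Equation~\ref{e:O} are homogeneous and $X_{i}$ contributes only a constant factor carrying the $G$-action, this compatibility is automatic. The $G$-equivariant structure on $\OO_{q}\otimes X_{i}$ is the one inherited from the action of $F(\OO_{q})$ together with the $\MM$-structure on $X_{i}$, and the maps are maps of $G$-modules because they are built from the $\C$-morphisms of Proposition~\ref{p:triangle} tensored with $\mathrm{id}_{X_{i}}$.

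I do not expect a serious obstacle here: the statement is essentially Proposition~\ref{p:triangle} with an inert tensor factor inserted, and indeed the authors' own earlier proof (``This is just Proposition~\ref{p:triangle} rephrased'') signals that the same reasoning applies. The only thing to verify beyond exactness is that the first two maps in the stated triangle coincide with the maps prescribed in the definition of $\D_{G}(\PP^{1}_{q})$ — namely that the map $\OO_{q}\otimes X_{i} \to \OO_{q}(1)\otimes V_{1}\otimes X_{i}$ agrees with the one coming from $V_{1}\otimes M \to M(1)$ via rigidity, and that the second arrow is induced by the $V_{1}\in\OO_{q}$ action. Both identifications follow by tracing through how the maps in Equation~\ref{e:O} were constructed degree by degree in the proof of Proposition~\ref{p:triangle}, so the verification is routine.
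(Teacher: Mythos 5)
Your proposal is correct and takes essentially the same route as the paper: the paper's proof likewise tensors the triangle of Equation~\ref{e:O} with $X_{i}$ (exactness of the tensor functor giving a new triangle) and then twists by $n$ to conclude $X_{i}(n) \in \D_{G}(\PP^{1}_{q})$. Your additional bookkeeping about gradings, the equivariant structure, and applying $F$ is precisely what the paper itself records in the remark immediately following its proof, so nothing is missing.
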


\begin{proof}
Tensoring with the triangle from Equation~\ref{e:O} with the object $X_{i}$ gives us the desired triangle. Twisting by $n$ shows that $X_{i} (n) \in \D_{G} (\PP^{1}_{q})$.
\end{proof}

Again, note that what is written is a bit ambiguous. To be precise, we should take the triangle from Equation~\ref{e:O}, apply the exact functor $F$ to get an $\OO_{q}$-module in $\MM$, then tensor with $X_{i}$ and twist by $n$ to get $X_{i}(n)$. However, this makes the notation rather cumbersome.

Note that by Theorem~\ref{t:qmckay} we can write $V_{1} \otimes X_{i} = \bigoplus_{j-i} X_{j}$. Hence we can rewrite the triangle $\OO_{q} \otimes X_{i} \to \OO_{q} (1) \otimes V_{1} \otimes X_{i} \to \OO_{q} (2) \otimes X_{i} \to \OO_{q} \otimes X_{i} [1]$ as 
\begin{equation}\label{e:AR}
\OO_{q} \otimes X_{i} \to \bigoplus_{i-j \text{ in } \Gamma} \OO_{q} (1) \otimes X_{j} \to \OO_{q} (2) \otimes X_{i} \to \OO_{q} \otimes X_{i} [1].
\end{equation}
Hence for every $(i,n) \in \Gamma \times \Z_{2h}$ we get a triangle
\begin{equation}\label{e:qfundrel}
X_{i} (n) \to  \bigoplus_{i-j \text{ in } \Gamma} X_{j} (n+1) \to X_{i}(n+2) \to X_{i} (n) [1].
\end{equation}

\begin{prop}\label{p:hom}
Let $X_{i} (n)$ be as above, let $\hh$ be a height function and let $\Gamma_{\hh}$ be the corresponding slice.
\begin{enumerate}
\item $\Hom ( X_{i} (n) , X_{i} (n) ) = \mathbb{K}$
\item $\Ext^{1} ( X_{i} (n) , X_{i} (n) ) = 0$.
\item $\Hom ( X_{i} (n) , X_{j} (n+1) ) = 
\begin{cases}
\mathbb{K} \text{ if } i-j \text{ in } \Gamma \\
0 \text{ otherwise.}
\end{cases}
$
\item If $(i,n), (j,m) \in \Gamma_{\hh}$, then $\Ext^{k} (X_{i} (n) , X_{j} (m)) = 0$ for $k\geq 1$.
\item If there is a path $(i,n) \to (j,m)$ in $\Gamma_{\hh}$, then $\Hom (X_{j} (m), X_{i} (n)) = 0$.
\end{enumerate}
\end{prop}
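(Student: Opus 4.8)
The plan is to reduce all five statements to one computation in the semisimple module category $\MM$, exploiting that each $X_i(n) = \OO_q(n)\otimes X_i$ is a \emph{free} $\OO_q$-module. Since the twist $(n)$ is an exact autoequivalence, I may normalize $n=0$ and compute $\RHom(\OO_q\otimes X_i,\OO_q(l)\otimes X_j)$ with $l=m-n$. A free dg-module is semifree, hence homotopy-projective, so $\RHom$ is computed by the ordinary Hom-complex of dg-modules; by the free--forgetful adjunction (applied through the tensor functor $F$) this Hom-complex is $\Hom_\MM(X_i,(\OO_q(l)\otimes X_j)^\bullet_0)$, the maps out of the generator $\I\otimes X_i$, which sits in bidegree $(0,0)$, into the homogeneous-degree-$0$ part. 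Because $\OO_q$ carries the zero differential, this Hom-complex has zero differential, so for $k\in\Z_2$
$$\Ext^k(X_i(n),X_j(m)) \;=\; \Hom_\MM\big(X_i,\,(\OO_q)^k_{\,m-n}\otimes X_j\big).$$
Here I record the graded pieces read off from the construction and Example~\ref{e:A4}: $(\OO_q)^0_l=V_l$ for $0\le l\le h-2$ and $0$ otherwise, while $(\OO_q)^1_l=V_{2h-2-l}$ for $h\le l\le 2h-2$ and $0$ otherwise.

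With this reduction, parts (1)--(3) are immediate. For (1) and (2) take $l=0$: the degree-$0$ piece $(\OO_q)^0_0\otimes X_i=X_i$ gives $\Hom_\MM(X_i,X_i)=\mathbb{K}$ by semisimplicity, while $(\OO_q)^1_0=0$ gives $\Ext^1=0$. For (3) take $l=1$: $(\OO_q)^0_1\otimes X_j=V_1\otimes X_j$, so the group is $\Hom_\MM(X_i,V_1\otimes X_j)$, whose dimension is precisely the adjacency number $n_{ij}$ from the definition of $\Gamma$; as $\Gamma$ is simply laced this is $1$ when $i-j$ and $0$ otherwise.

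Parts (4) and (5) bring in the slice combinatorics, through two facts about a height function: every arrow of $\Gammahat$ raises the height by one, so a (nonconstant) path $(i,n)\to(j,m)$ in $\Gamma_\hh$ forces $1\le m-n\le h-2$ (a directed path in an orientation of a Dynkin tree is simple, so shorter than the number of vertices, which is at most $h-1$); and $|\hh(i)-\hh(j)|\le d(i,j)$, the graph distance. For (5) such a path gives $l=n-m$ with $-(h-2)\le l\le -1$, i.e. $l\bmod 2h\in[h+2,2h-1]$, which is disjoint from the support $[0,h-2]$ of $(\OO_q)^0$; hence $\Hom(X_j(m),X_i(n))=0$. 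For (4) I must show $\Hom_\MM(X_i,(\OO_q)^1_{m-n}\otimes X_j)=0$. If $\hh(j)\ge\hh(i)$ then $m-n\in[0,h-2]$ lies outside the support $[h,2h-2]$ of $(\OO_q)^1$, so the group vanishes. If $\hh(j)<\hh(i)$, then with $s=2h-2-(m-n\bmod 2h)$ a short computation gives $s=(\hh(i)-\hh(j))-2\le d(i,j)-2$; since $X_i$ occurs in $V_s\otimes X_j$ only if $d(i,j)\le s$, the Hom again vanishes.

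The main obstacle is establishing the reduction formula cleanly: checking that $\OO_q\otimes X_i$ is homotopy-projective in $\D_G(\OO_q)/T^2$ (so $\RHom$ is the naive Hom-complex), setting up the free--forgetful adjunction through $F$ so that the generator genuinely lands in the homogeneous-degree-$0$ part, and keeping the $\Z_2$-homological and $T^2$-periodic bookkeeping consistent. Once the differential-free Hom-complex is identified, every remaining step is a finite check with the fusion rule and the distance function on $\Gamma$. I also note that in the $2$-periodic category $\Ext^k$ depends only on $k\bmod 2$, so the assertion in (4) ``for all $k\ge 1$'' is exactly the single statement $\Ext^1=0$ proved above.
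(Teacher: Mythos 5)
Your proof is correct, and for parts (1), (2), (3) and (5) it is essentially the paper's argument in different packaging: your reduction
$\Ext^{k}(X_{i}(n),X_{j}(m))\simeq\Hom_{\MM}\bigl(X_{i},(\OO_{q})^{k}_{m-n}\otimes X_{j}\bigr)$,
obtained from K-projectivity of the semifree module $\OO_{q}\otimes X_{i}$ plus the free--forgetful adjunction, is the same mechanism as the paper's Lemma~\ref{l:hom}, which shows by an explicit diagram chase that a map of equivariant $\OO_{q}$-modules is determined by its component at the free generator; after that, both proofs are the same support/degree bookkeeping. Your framing is slightly cleaner in that it computes all $\Ext^{k}$ simultaneously with no separate treatment of $\Hom(-,-[1])$. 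The genuine divergence is part (4). The paper only compares homogeneous supports when the slice is bipartite (so $m=n$ or $m=n+1$), and then deduces the case of a general height function from the equivalence of Theorem~\ref{t:tilting} together with standard facts about $\Ext$ for quiver representations --- a forward dependency that forces the paper to remark explicitly that the bipartite case of that theorem does not rely on the full Proposition, to avoid circularity. Your argument is instead uniform over all slices and entirely internal to the fusion-category computation: if $\hh(j)\ge\hh(i)$ the twist $m-n$ misses the support $[h,2h-2]$ of $(\OO_{q})^{1}$, and if $t=\hh(i)-\hh(j)>0$ the relevant component is $V_{t-2}$, which cannot contain $X_{i}$ in $V_{t-2}\otimes X_{j}$ because $V_{s}$ is a summand of $V_{1}^{\otimes s}$, tensoring by $V_{1}$ moves one step in $\Gamma$, and $t\le d(i,j)$; this is self-contained, removes the circularity concern, and is a real improvement in economy of means.

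One caveat, which applies equally to the paper: in a $2$-periodic category $\Ext^{2}(X,Y)\simeq\Hom(X,Y)$, so the literal assertion ``$\Ext^{k}=0$ for all $k\ge 1$'' in (4) fails whenever $\Hom\neq 0$ (e.g.\ $i=j$, $m=n$). What both you and the paper actually prove is the only sensible reading, namely $\Ext^{1}=0$; your closing remark that periodicity reduces ``$k\ge1$'' to ``$k=1$'' should be stated with this qualification, since by the same periodicity $\Ext^{\mathrm{even}}\simeq\Hom$, and the reduction is therefore an interpretation of the statement rather than a consequence of it.
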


To prove this, we first need a Lemma.

\begin{lemma}\label{l:hom}
Let $f: X_{i} (n) \to X_{j} (m)$ be a morphism of equivariant $\OO_{q}$-modules, and let $f_{k}$ denote the degree $k$ component of $f$. Then $f$ is completely determined by $f_{2h-n} : \I \otimes X_{i} \to V_{m-n} \otimes X_{j}$.
\end{lemma}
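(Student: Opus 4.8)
The plan is to recognise $X_i(n) = \OO_q(n) \otimes X_i$ as a \emph{free} (induced) $\OO_q$-module whose generator lives in a single homogeneous degree; the lemma then follows from the trivial observation that a morphism out of a module is determined by its value on a generating set. First I would pin down the gradings. Since $(\OO_q(n))_k = (\OO_q)_{n+k}$ and the unit $\I = V_0$ sits in $(\OO_q)_0$, the copy of the unit inside $\OO_q(n)$ lives in homogeneous degree $k = 2h-n$. Hence the component of $X_i(n)$ in homogeneous degree $2h-n$ and homological degree $0$ is exactly $(\OO_q)_0 \otimes X_i = \I \otimes X_i \cong X_i$. Likewise $(X_j(m))_{2h-n} = (\OO_q)_{m-n} \otimes X_j$, which in homological degree $0$ is $V_{m-n} \otimes X_j$; this identifies the source and target of $f_{2h-n}$ precisely as in the statement.

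Next I would show that $\I \otimes X_i$ generates $X_i(n)$ as an $\OO_q$-module. The key computation is that for every $a$ the action map
\[
(\OO_q)_a \otimes (X_i(n))_{2h-n} \longrightarrow (X_i(n))_{2h-n+a}
\]
is the multiplication of $\OO_q$ tensored with $\id_{X_i}$, namely $(\OO_q)_a \otimes (\OO_q)_0 \otimes X_i \to (\OO_q)_a \otimes X_i$, i.e. multiplication by the unit, which is an isomorphism. This is exactly the statement that $\OO_q(n)$ is the free rank-one $\OO_q$-module, so that $X_i(n)$ is free on $X_i$. I would emphasise here that the generator is taken over the \emph{whole} dg-algebra $\OO_q$, including its homological degree $1$ part: it is precisely the degree-$1$ part of $\OO_q$ that carries $\I \otimes X_i$ into homological degree $1$ and thereby bridges the ``gap'' at homogeneous degree $h-2$, where multiplication by $V_1$ inside $S_q$ alone dies. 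This is the one conceptual point that deserves care, and it is where the necessity of passing from $S_q$ to the complex $\OO_q$ reappears; working only with the subalgebra generated by $V_1$ would fail to generate.

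Finally, since $f$ is a morphism of $\OO_q$-modules it commutes with the action, so $f_{2h-n+a}(v \cdot x) = v \cdot f_{2h-n}(x)$ for $v \in (\OO_q)_a$ and $x \in (X_i(n))_{2h-n}$. By the surjectivity just established, every element of $(X_i(n))_{2h-n+a}$ has the form $v \cdot x$, so $f_{2h-n+a}$ is completely determined by $f_{2h-n}$; letting $a$ range over $\Z_{2h}$ determines $f$ in every homogeneous degree. Equivalently, one may package the whole argument as the free--forgetful adjunction for the functor $X \mapsto \OO_q \otimes X$, which gives $\Hom(X_i(n), X_j(m)) \cong \Hom_{\MM}(X_i, V_{m-n} \otimes X_j)$ with the isomorphism sending $f$ to $f_{2h-n}$. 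I expect no genuine obstacle beyond careful bookkeeping of the two gradings and, as noted, checking that the single generating component reaches across the homological-degree jump.
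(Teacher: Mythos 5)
Your proof is correct and follows essentially the same route as the paper: both arguments exploit that $X_i(n)$ is the free rank-one $\OO_q$-module generated by the unit component $\I \otimes X_i$ in homogeneous degree $2h-n$ (the paper's commutative diagrams, one for each homological degree, are exactly your statement that the action maps out of the generator are unit-multiplication isomorphisms and that $f$ commutes with them, giving $f_k = m(\id\otimes f_{2h-n})$). Your observation that the homological degree $1$ part of $\OO_q$ is needed to reach across the gap at degree $h-2$ corresponds precisely to the paper's second diagram, so the two proofs differ only in packaging (adjunction language versus explicit diagrams).
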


\begin{proof}
To make indexing simpler, consider only the case $\OO_{q} \otimes X_{i} \to \OO_{q} (m) \otimes X_{j}$, with $n=0$ and $m \in \Z_{2h}$. To obtain the more general statement, twist both sides by $n$. 

Since $f$ is a map of equivariant $\OO_{q}$ modules the following diagram is commutative for all $k$.

$$
\xymatrix{
\I \otimes X_{i} \ar[rr]^{f_{0}} \ar[d] & & V_{m} \otimes X_{j} \ar[d] \\
V_{k} \otimes X_{i} \ar[rr]^{id\otimes f_{0}} \ar[d]^{id} & & V_{k} \otimes V_{m} \otimes X_{j} \ar[d]_{m} \\
V_{k} \otimes X_{i} \ar[rr]^{f_{k}} & & V_{m+k} \otimes X_{j}
}$$

This deals with the maps in homological degree 0.

Similarly, for the maps in homological degree 1, we have that the following diagram is commutative for all $k$.

$$
\xymatrix{
\I \otimes X_{i} \ar[rr]^{f_{0}} \ar[d] & & V_{m} \otimes X_{j} \ar[d] \\
(V_{h-2} \otimes V_{h-2-k}) \otimes X_{i} \ar[rr]^{id\otimes f_{0}} \ar[d]^{id} & & (V_{h-2} \otimes V_{h-2-k}) \otimes V_{m} \otimes X_{j} \ar[d]_{m} \\
(V_{h-2} \otimes V_{h-2-k}) \otimes X_{i} \ar[rr]^{f_{k}} & & (V_{h-2} \otimes V_{h-2-(k+m)}) \otimes X_{j}
}$$

Noting that in both cases the bottom left arrow is the identity, we see that $f_{k} = m (\id \otimes f_{0})$.
\end{proof}

We now complete the proof of Proposition~\ref{p:hom}.
\begin{proof} \
\begin{enumerate}
\item Let $f_{k} : X_{i} (n)_{k} \to X_{i} (n)_{k}$ denote the morphism in degree $k$. Note that in degree 0 we just have $X_{i} \to X_{i}$, which is 1 dimensional. By Lemma~\ref{l:hom} the choice of the map $f_{0}$ completely determines $f_{k}$, so $\Hom (X_{i} (n) , X_{i} (n) )$ is 1 dimensional.
\item Note that there are no morphisms $X_{i} (n) \to X_{i} (n) [1]$, so $$\Ext^{1} (X_{i} (n), X_{i} (n)) = \Hom (X_{i} (n) , X_{i} (n) [1]) = 0.$$
\item As above, the map in degree $2h-n$ completely determines $f$. In degree $2h-n$ we have $\I \otimes X_{i} \to V_{1} \otimes X_{j} = \bigoplus_{l-j} X_{l}$, which gives the result.
\item First note that $(\OO_{q} \otimes X_{i})^{0}$ is concentrated in homogeneous degrees $0 \leq k \leq h-2$ and $(\OO_{q}(1) \otimes X_{j})^{1}$ is concentrated in homogeneous degrees $h-1 \leq k \leq 2h-3$. Similarly, $(\OO_{q} \otimes X_{i})^{1}$ is concentrated in homogeneous degrees $h \leq k \leq 2h-1$ and $(\OO_{q}(1) \otimes X_{j})^{0}$ is concentrated in homogeneous degrees $ -1 \leq k \leq h-3$. Hence $\Hom (\OO_{q} \otimes X_{i} , (\OO_{q}(1) \otimes X_{j}) [1] ) = 0$. Twisting by $n$ gives $\Hom (X_{i} (n) , X_{j} (n+1) [1]) = 0$, hence $\Ext^{1} (X_{i} (n) , X_{j} (n+1) ) = 0$.\\
A similar argument comparing the homogeneous components of $X_{i} (n)$ and $X_{j} (n)$ show that $\Ext^{1} (X_{i} (n) , X_{j} (n) ) = 0$.\\
Together, this shows that if $(i,n), (j,m) \in \Gamma_{\hh}$ and $\Om_{\hh}$ is bipartite, then $\Ext^{1} (X_{i} (n) , X_{j} (m))= 0$. (If $\Om_{\hh}$ is bipartite, then $m=n$ or $m=n+1$.)\\
To prove the general case, use Theorem~\ref{t:tilting} in the case that $\hh$ is bipartite. The resulting equivalence of categories will give the result by well-know statements about $\Ext$ for quiver representations. (Note the Proof of Theorem~\ref{t:tilting} in the bipartite case does not depend on the full statement of this Proposition, so we are not making a circular argument here.)

\item Since the maximum length of a path in $\Gamma_{\hh}$ is $h-2$, we get that $m-n \leq h-2$, and $k=2h-(m-n) > h-1$. Hence $(X_{i} (n))^{0}_{2h-m} = (\OO_{q})^{0}_{k} \otimes X_{i}=0$, and so $f_{2h-m} = 0$. By Lemma~\ref{l:hom} $f : X_{j} (m) \to X_{i} (n)$ must also be zero.

\end{enumerate}
\end{proof}

\section{Representations of $\Gammahat$}\label{s:catD}

Recall the quivers  $\Gammahat$, $\Gamma \times \Z_{2h}$, and $\Gammahat_{cyc}$. In \cite{kirillovthind2} we studied a triangulated category of complexes of representations of $\Gammahat$, denoted by $\D$. For a vertex $q$ and object $X$, we denote by $X(q)$ the complex of vector spaces attached to $q$ by the object $X$. The category $\D$ had objects which were complexes of representations of $\Gammahat$ satisfying the ``fundamental relation" that for every vertex $(i,n) \in \Gammahat$ there is an isomorphism 
$$X(i,n+2) \simeq \text{Cone} \{ X(i,n) \to \bigoplus_{j-i} X(j,n+1) \}.$$ Alternatively, we can study representations of $\Gamma \times \Z$ satisfying the fundamental relation. Denote this category by $\D(\Gamma \times \Z)$. Since $\Gamma \times \Z = \Gammahat^{0} \sqcup \Gammahat^{1}$, and $\Gammahat^{0} \simeq \Gammahat^{1}$ as quivers, an object consists of a pair $(M_{1} , M_{2}) \in \D \times \D$. So $\D(\Gamma \times \Z) = \D \times \D$.

We then considered the 2-periodic quotient $\C (\Gammahat)= \D /T^{2}$. Objects here can be thought of as $\Z_{2}$-graded complexes of representations of $\Gammahat_{cyc}$ satisfying the same fundamental relation. Similarly, we can consider representations of $\Gamma \times \Z_{2h}$, satisfying the fundamental relation. Denote this category by $\C(\Gamma \times \Z_{2h})$, and note that $\C (\Gamma \times \Z_{2h}) = \C(\Gammahat) \times \C(\Gammahat)$.

There is a functor $\tau : \C(\Gammahat) \to \C(\Gammahat)$ defined by $\tau (X) (q) = X (\tau^{-1}_{\Gammahat} q)$, where $\tau_{\Gammahat} : \Gammahat_{cyc} \to \Gammahat_{cyc}$ is the translation defined in Section~\ref{s:mckay}.

We summarize the important properties of the category $\C(\Gammahat)$ in the following proposition. For more details (and proof of the following Proposition) see \cite{kirillovthind2}.

\begin{prop}\label{p:catC}
Let $\K (\C(\Gammahat))$ denote the Grothendieck group of $\C(\Gammahat)$. \\
Set $< \F , \G > = \dim \Hom (\F, \G) - \dim \Ext^{1} (\F , \G)$ and $( \F, \G ) = < \F , \G > + < \G, \F >$.
\begin{enumerate}
\item $\K (\C(\Gammahat))$ can be identified with the root lattice of $\Gamma$, with bilinear form given by the form $( \cdot \ , \cdot )$ above. Moreover, the set $\Ind = \{ [X] \in \K(\C(\Gammahat)) \ | \ X \text{ is indecomposable in  } \C(\Gammahat) \}$ can be identified with the set of roots.
\item The map induced by the equivalence $X \mapsto \tau X$ on $\K ( \C(\Gammahat) )$, is a Coxeter element for the corresponding root system.
\item For each vertex $v \in \Gammahat_{cyc}$, there is a unique, natural, indecomposable object $X_{v}$, and these give a full list of indecomposable objects (up to isomorphism).
\item $\Hom (X_{v} , X_{v^{\prime}}) = \Path_{\Gammahat} (v, v^{\prime} ) / J$ where $J$ is the mesh ideal from Section~\ref{s:mckay}.
\item The map $\Ind \to \Gammahat_{cyc}$ given by $[X_{v}] \mapsto v$, identifies $\Gammahat_{cyc}$ as the Auslander-Reiten quiver of $\C(\Gammahat)$, and identifies the Coxeter element with the translation $\tau_{\Gammahat}$.
\item For each $v$ there is a triangle $$X_{v} \to \bigoplus_{v\to v^{\prime}} X_{v^{\prime}} \to X_{\tau_{\Gammahat} v} \to X_{v} [1].$$ These triangles give a full list of relations in $\K (\C(\Gammahat))$.
\item $\C(\Gammahat)$ has Serre Duality: $\Hom (X,Y) \simeq \big{(}\Ext^{1} (Y, \tau X) \big{)}^{*}$.
\end{enumerate}
\end{prop}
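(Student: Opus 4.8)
The plan is to reduce the whole proposition to the structure theory of the \emph{root category} of the path algebra of $(\Gamma,\Om)$, in exact parallel with the classical Theorem~\ref{t:Gsheaves}. Fix a height function $\hh$ and let $\Gamma_{\hh}\subset\Gammahat$ be the associated slice, so that $\Gamma_{\hh}\simeq(\Gamma,\Om_{\hh})$ as quivers. The central step is to produce an equivalence of triangulated categories
$$
\C(\Gammahat)\ \xrightarrow{\ \sim\ }\ \D^{b}(\Rep(\Gamma,\Om_{\hh}))/T^{2}
$$
by restricting an object $X$ to the slice, $X\mapsto\big(X(v)\big)_{v\in\Gamma_{\hh}}$, read as a complex of representations of $(\Gamma,\Om_{\hh})$. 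The key point is that the fundamental relation $X(i,n+2)\simeq\text{Cone}\{X(i,n)\to\bigoplus_{j-i}X(j,n+1)\}$ is precisely the mesh relation of the translation quiver $\Z\Gamma$: it reconstructs $X$ on all of $\Gammahat_{cyc}$ from its restriction to a single slice, and conversely lets any complex of slice representations be extended, uniquely up to quasi-isomorphism, to an object of $\C(\Gammahat)$. I would verify that this functor is triangulated (with the shift of $\C(\Gammahat)$ matching $[1]$ and the periodicity $T^{2}$ matching $[2]$), essentially surjective (by the reconstruction just described), and fully faithful (which reduces to the $\Hom$ computation of the next step).

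Granting the equivalence, properties (3)--(5) are read off from the Auslander--Reiten theory of the representation-finite hereditary algebra $(\Gamma,\Om_{\hh})$. Its AR-quiver is the translation quiver $\Z\Gamma$; the $2$-periodic quotient folds this to $\Gammahat_{cyc}$, and a count confirms the match: $\Gammahat_{cyc}$ has $|\Gamma_{0}|\,h$ vertices, equal to the number of indecomposables $M[0],M[1]$ of the root category (one shifted pair for each of the $|\Gamma_{0}|\,h/2$ positive roots). This produces the canonical indecomposable $X_{v}$ attached to each vertex $v$ (property (3)) and identifies $\Gammahat_{cyc}$ with the AR-quiver, $\tau$ with $\tau_{\Gammahat}$ (property (5)). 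Property (4), $\Hom(X_{v},X_{v'})=\Path_{\Gammahat}(v,v')/J$, is then exactly the statement that the category of indecomposables is the mesh category of $\Z\Gamma$ modulo the mesh ideal, a standard consequence (Riedtmann, Happel) of the representation-finite hereditary situation.

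The homological assertions follow from general root-category formalism. Since in the Grothendieck group of any triangulated category $[M[1]]=-[M]$, we get $\K(\C(\Gammahat))\cong\K(\D^{b}(\Rep(\Gamma,\Om_{\hh})))\cong\Z^{\Gamma_{0}}$, the root lattice; the form $\langle\,,\,\rangle$ is the Euler form, its symmetrization $(\,,\,)$ is the root-lattice bilinear form, and Gabriel's theorem matches the classes of indecomposables with the roots (positive roots from modules, negative from their shifts), giving (1). The mesh triangle $X_{v}\to\bigoplus_{v\to v'}X_{v'}\to X_{\tau_{\Gammahat}v}\to X_{v}[1]$ is the almost-split triangle (property (6)); reading it in $\K(\C(\Gammahat))$ shows $[X_{\tau_{\Gammahat}v}]$ is the reflection of $[X_{v}]$ at $v$, so these triangles impose exactly the defining relations of the root lattice, and the induced automorphism $X\mapsto\tau X$ is the product of these reflections, a Coxeter element of order $h$ (property (2)). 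Finally, Happel's theorem that the Serre functor of $\D^{b}$ of a hereditary algebra is $\tau[1]$ descends to the periodic quotient and yields $\Hom(X,Y)\simeq\big(\Ext^{1}(Y,\tau X)\big)^{*}$, which is (7).

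The main obstacle is the first step: making the slice-restriction functor into a rigorous triangulated equivalence. The two delicate points are full faithfulness --- equivalently, that reconstruction from a slice neither creates nor destroys morphisms, which I expect to prove by an induction along $\Z$ using the cone in the fundamental relation (in the style of Lemma~\ref{l:prelim}) --- and matching the periodicity correctly, so that $T^{2}$ corresponds to $[2]$ and hence $\tau_{\Gammahat}$ has order exactly $h$ and $\Gammahat_{cyc}$ (rather than $\Z\Gamma$ or another quotient) is the right fundamental domain. Once the equivalence and the correct period are in place, the rest is a direct application of well-documented results on AR-quivers and root categories.
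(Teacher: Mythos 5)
The paper contains no proof of this proposition to compare against: it is stated explicitly as a summary of results imported from the companion paper, with the proof deferred to \cite{kirillovthind2}. Your strategy, however, is essentially the same one used there, and it is also the one this paper itself runs for the equivalent quantum category in Section~\ref{s:tilting}: restriction to a slice $\Gamma_{\hh}$ gives an equivalence with $\D^{b}(\Gamma,\Om_{\hh})/T^{2}$ (Theorem~\ref{t:tilting}, proved via the reconstruction-from-a-slice Lemma~\ref{l:determine}), after which items (1)--(7) are read off from standard Auslander--Reiten and root-category theory for Dynkin quivers, exactly as you outline. So the proposal is sound in outline and in method. Two remarks on the points you rightly flag as delicate, since they carry the real content. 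First, full faithfulness of slice restriction: extending a morphism across the meshes goes through cones, which are not functorial, so uniqueness of the extension requires vanishing of the appropriate $\Hom$ and $\Ext^{1}$ groups between slice objects (on the quantum side this is exactly the role of Proposition~\ref{p:hom}(4),(5)); your proposed induction ``in the style of Lemma~\ref{l:prelim}'' must be supplemented by precisely this vanishing, or faithfulness can fail. Second, the periodicity bookkeeping: identifying the AR-quiver of the $2$-periodic quotient with $\Gammahat_{cyc}$, rather than some other quotient of the translation quiver, rests on the functor-level isomorphism $\tau^{-h}\simeq[2]$ for $\D^{b}$ of a Dynkin quiver (the fractional Calabi--Yau property); your vertex count $|\Gamma_{0}|\,h$ is consistent with this but does not prove it. Both facts are standard and available in the sources you invoke (Happel, Riedtmann), so with those citations made explicit your plan goes through.
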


We shall define an exact functor between $\D_{G} (\OO_{q})$ and $\C(\Gamma \times \Z_{2h})$. 

Consider an object $M \in \D_{G} (\OO_{q})$. Write $M=\bigoplus_{k \in \Z_{2h}} M^{\bullet}_{k}$. Then $M^{\bullet}_{k} = \bigoplus_{i} M^{\bullet}_{i,k} \otimes X_{i}$ where $M^{\bullet}_{i,k}$ is the multiplicity space for $X_{i}$ in $M^{\bullet}_{k}$. So we have the following decomposition of $M$:
\begin{equation}\label{e:objectM}
M = \bigoplus_{(i,k) \in \Gamma \times \Z_{2h}} M^{\bullet}_{i,k} \otimes X_{i}.
\end{equation}
Consider the map $M^{\bullet} \to M^{\bullet}(1) \otimes V_{1}$. In degree $k$ this map looks like $M^{\bullet}_{k} \to M^{\bullet}_{k+1} \otimes V_{1}$. The right hand side has the following $X_{i}$-th component:
\begin{align*} 
[ M^{\bullet}_{k+1} \otimes V_{1} ]_{i} &= [ \bigoplus_{j} (M^{\bullet}_{j,k+1} \otimes X_{j}) \otimes V_{1} ]_{i}\\
&= [ \bigoplus_{j} M^{\bullet}_{j,k+1} \otimes (X_{j} \otimes V_{1} ) ]_{i} \\
&= [ \bigoplus_{j} M^{\bullet}_{j,k+1} \otimes (\bigoplus_{l-j} X_{l}) ]_{i} \\
&= [ \bigoplus_{j} \bigoplus_{l-j} M^{\bullet}_{j,k+1} \otimes X_{l} ]_{i} \\
&= \bigoplus_{j-i} M^{\bullet}_{j,k+1} \otimes X_{i}. \\
\end{align*}
From this we see that the map $M^{\bullet} \to M^{\bullet}(1) \otimes V_{1}$ gives linear maps $m_{i,k} : M^{\bullet}_{i,k} \to \bigoplus_{j-i} M^{\bullet}_{j,k+1}$.

Define an exact functor $\Phi$ from $\D_{G} (\OO_{q}) \to \D (\Gamma \times \Z_{2h})$, where $\D (\Gamma \times \Z_{2h})$ is the derived category of representations of $\Gamma \times \Z_{2h}$, as follows:

\begin{align*}
\Phi (M^{\bullet}) &= \bigoplus_{(i,k) \in \Gamma \times \Z_{2h}} \RHom (X_{i} , M^{\bullet}_{k}) \\
&= \bigoplus_{ (i,k) \in \Gamma \times \Z_{2h}} M^{\bullet}_{i,k}
\end{align*}

with the maps along edges given by $m_{i,k} :M^{\bullet}_{i,k} \to \bigoplus_{j-i} M^{\bullet}_{j,k+1}$, induced from $M^{\bullet} \to M^{\bullet}(1) \otimes V_{1}$.

Restrict this functor to $\D_{G} (\PP^{1}_{q})$. Then since an object $M \in \D_{G} ( \PP^{1}_{q} )$ satisfies $M \to M(1) \otimes V_{1} \to M(2) \to M[1]$ is a triangle,  and $\RHom$ is exact, we see that $\Phi (M)$ satisfies the fundamental relation. In particular, there is a triangle $M_{(i,n)} \to \bigoplus_{j - i} M_{(j, n+1)} \to M_{(i, n+2)} \to M_{(i,n)} [1]$, and isomorphim $M_{(i,n+2)} \simeq Cone (m_{i,k})$. Hence $\D_{G} ( \PP^{1}_{q})$ maps into $\C(\Gamma \times \Z_{2h})$. I.e. we get a functor $\Phi : \D_{G} (\PP^{1}_{q}) \to \C(\Gamma \times \Z_{2h})$.

Now define a functor $\Psi : \C(\Gamma \times \Z_{2h}) \to \D_{G} ( \PP^{1}_{q})$ by

$$\Psi (Y) = \bigoplus_{(i,k) \in \Gamma \times \Z_{2h}} Y_{i,k} \otimes X_{i} (k)$$ 

where we view $Y_{i,k}$ as a multiplicity space. The maps $Y_{i,k} \to \bigoplus Y_{j,k+1}$ induce maps $\Psi (Y) \to \Psi (Y) (1) \otimes V_{1}$. Rigidity then gives the action map $\Psi (Y) \otimes V_{1} \to \Psi (Y) (1)$. Since $Y$ satisfies the fundamental relation, $\Psi (Y)$ satisfies $\Psi(Y) \to \Psi (Y) (1) \otimes V_{1} \to \Psi(2) \to \Psi(Y) [1]$ is a triangle.

\begin{thm}\label{t:equiv}
The functor $\Phi$ defines an equivalence between $\D_{G} (\PP^{1}_{q})$ and $\C(\Gamma \times \Z_{2h})$, with inverse $\Psi$.
\end{thm}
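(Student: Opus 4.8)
The plan is to prove that $\Phi$ and $\Psi$ are mutually quasi-inverse by reducing everything to the ``locally free'' generators $X_i(n)$, on which all morphism spaces are already computed. Both $\D_G(\PP^1_q)$ and $\C(\Gamma\times\Z_{2h})$ are triangulated categories generated by the families $\{X_i(n)\}$ and $\{X_{(i,n)}\}$ respectively, and both $\Phi$ and $\Psi$ are exact: they commute with the twist $(1)$ and carry the defining triangle \eqref{e:fundrel} to the fundamental relation. Hence it suffices to establish that $\Phi$ restricts to an equivalence of the full subcategories spanned by these generators, and then to propagate the statement to all objects along triangles using exactness.

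First I would identify $\Phi$ on generators. Applying the exact functor $\Phi$ to the defining triangle \eqref{e:qfundrel} of $X_i(n)$ produces precisely the Auslander--Reiten triangle $X_{(i,n)}\to\bigoplus_{i-j}X_{(j,n+1)}\to X_{(i,n+2)}\to X_{(i,n)}[1]$ of Proposition~\ref{p:catC}(6) (note $\tau_{\Gammahat}(i,n)=(i,n+2)$ matches the twist by $2$). Combined with $\End(X_i(n))=\mathbb{K}$ from Proposition~\ref{p:hom}(1), which forces indecomposability, the uniqueness statement of Proposition~\ref{p:catC}(3) yields a natural isomorphism $\Phi(X_i(n))\cong X_{(i,n)}$. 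For full faithfulness, Lemma~\ref{l:hom} and Proposition~\ref{p:hom} together show that every morphism $X_i(n)\to X_j(m)$ is a composite of the elementary maps along arrows of $\Gammahat$ modulo the mesh relations, so that $\Hom(X_i(n),X_j(m))=\Path_{\Gammahat}((i,n),(j,m))/J$; this is exactly the description of $\Hom(X_{(i,n)},X_{(j,m)})$ given in Proposition~\ref{p:catC}(4). Verifying that $\Phi$ sends each elementary map to the corresponding arrow then shows $\Phi$ is an isomorphism on these $\Hom$ spaces. Being exact, fully faithful on a generating subcategory, and essentially surjective onto a generating set, $\Phi$ is an equivalence.

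It then remains to identify $\Psi$ as the quasi-inverse, for which (since $\Phi$ is now known to be an equivalence) it is enough to produce a natural isomorphism $\Psi\Phi\cong\id$. There is a canonical counit $\Psi\Phi(M)\to M$: by construction $\Psi\Phi(M)=\bigoplus_{(i,k)}M_{i,k}\otimes X_i(k)$, and the $\OO_q$-action on $M$ assembles the inclusions $M_{i,k}\otimes X_i\hookrightarrow M_k$ into a morphism in $\D_G(\PP^1_q)$ whose induced structure maps are exactly the $m_{i,k}$ recorded by $\Phi$. The object $\Psi\Phi(M)$ is a (generally large) complex built from locally free generators, and the real content is that, equipped with the differential coming from the $m_{i,k}$, it is a resolution of $M$ and so collapses to $M$ in the triangulated category. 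This is the exact analogue of a line-bundle (Koszul-type) resolution computing the object it resolves, and the cancellation is driven precisely by the defining exactness of $\D_G(\PP^1_q)$: the triangle $M\to V_1\otimes M(1)\to M(2)\to M[1]$ of \eqref{e:fundrel}, propagated through all $\Z_{2h}$ degrees by the structure of $\OO_q$ established in Proposition~\ref{p:triangle}.

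The step I expect to be the main obstacle is showing this counit is an isomorphism. On the generators it reduces to checking that $\Psi(X_{(i,n)})\cong X_i(n)$; applying the equivalence $\Phi$ this becomes the purely combinatorial assertion $\bigoplus_{v}(X_{(i,n)})_v\otimes X_v\cong X_{(i,n)}$ in $\C(\Gammahat)$ with the mesh-induced differential, i.e. that each indecomposable is recovered as the total object of its own vertex-wise decomposition. This collapse is the heart of the argument and is where the fundamental relation must be exploited carefully; it should follow from the structure of $\C(\Gammahat)$ developed in \cite{kirillovthind2}. Once it is in place, naturality of the counit and exactness extend $\Psi\Phi\cong\id$ from generators to all of $\D_G(\PP^1_q)$, which together with $\Phi$ being an equivalence identifies $\Psi$ as its inverse and completes the proof of Theorem~\ref{t:equiv}.
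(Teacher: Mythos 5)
Your route is genuinely different from the paper's, and it contains two gaps, one of which is fatal as written. The paper's own proof is a one-liner (``follows from the definitions'') because $\Phi$ and $\Psi$ are designed to be tautologically inverse: by semisimplicity of $\Mod_{G}$ every homogeneous component decomposes canonically as $M_{k}\cong\bigoplus_{i}M_{i,k}\otimes X_{i}$, the functor $\Phi$ records the multiplicity spaces $M_{i,k}$ together with the edge maps $m_{i,k}$ extracted from the $V_{1}$-action, and $\Psi$ performs the inverse assembly. In particular, in the formula $\Psi(Y)=\bigoplus Y_{i,k}\otimes X_{i}(k)$ the symbol $X_{i}(k)$ is \emph{not} the $\OO_{q}$-module $\OO_{q}(k)\otimes X_{i}$: the paper states explicitly that the structure maps $\Psi(Y)\to\Psi(Y)(1)\otimes V_{1}$ are \emph{induced} by the edge maps $Y_{i,k}\to\bigoplus Y_{j,k+1}$ (with rigidity then giving the action map), so the underlying graded object is just $X_{i}$ placed in homogeneous degree $k$ with multiplicity $Y_{i,k}$, and the $\OO_{q}$-structure is built from $Y$. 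You instead read $\Psi(Y)$ as a direct sum of the modules $\OO_{q}(k)\otimes X_{i}$, and under that reading the theorem is false: already $\Phi\Psi(X_{(i,n)})$ contains $X_{(i,n)}\oplus\bigoplus_{j-i}X_{(j,n+1)}\oplus\cdots$, since each $X_{j}(l)$ contributes all of its vertex components. This is exactly why the ``collapse'' $\bigoplus_{v}(X_{(i,n)})_{v}\otimes X_{v}\cong X_{(i,n)}$, which you yourself flag as the heart of the argument, cannot be proved: the left-hand side is a large decomposable object, while the right-hand side is indecomposable. Repairing it would require replacing $\Psi$ by a convolution (twisted complex) whose differential is built from the $m_{i,k}$ --- a different functor from the one in the statement, which you neither define nor analyze.

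Second, your reduction to the generators $X_{i}(n)$ is circular relative to the paper's logic. The facts you invoke --- that the $X_{i}(n)$ generate $\D_{G}(\PP^{1}_{q})$ as a triangulated category, and that $\Hom(X_{i}(n),X_{j}(m))=\Path_{\Gammahat}((i,n),(j,m))/J$ --- are precisely the corollaries the paper deduces \emph{from} Theorem~\ref{t:equiv} (the corollary immediately following it, Corollary~\ref{c:indec}, and Corollary~\ref{c:main}(7)). Before the theorem one only has Proposition~\ref{p:hom} and Lemma~\ref{l:hom}: endomorphisms, Homs to adjacent vertices, and vanishing on slices; this is far short of the full path-algebra description needed for your full-faithfulness step. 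More importantly, an object of $\D_{G}(\PP^{1}_{q})$ is a priori an arbitrary equivariant $\OO_{q}$ dg-module satisfying the triangle condition, and nothing established at this stage places it in the subcategory generated by the $X_{i}(n)$; full faithfulness on a generating family plus essential surjectivity would identify only that generated subcategory with $\C(\Gamma\times\Z_{2h})$, leaving open what $\Phi$ does to objects outside it. The ingredient that closes both gaps simultaneously is the canonical isotypic decomposition of each $M_{k}$ --- which is the paper's entire proof, and which your proposal never uses.
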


\begin{proof}
Follows from the definitions of $\Phi$ and $\Psi$.
\end{proof}

\begin{cor}
The objects $X_{i} (n)$ form a complete list of indecomposable objects in $\D_{G} (\PP^{1}_{q})$.
\end{cor}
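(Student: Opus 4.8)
The plan is to transport the classification of indecomposables through the equivalence $\Phi$ of Theorem~\ref{t:equiv}. Since $\Phi : \D_{G}(\PP^{1}_{q}) \to \C(\Gamma \times \Z_{2h})$ is an equivalence of categories with inverse $\Psi$, it carries indecomposable objects to indecomposable objects and induces a bijection on isomorphism classes. Thus it suffices to classify the indecomposables on the combinatorial side and then match them with the objects $X_{i}(n)$.

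First I would invoke the identification $\C(\Gamma \times \Z_{2h}) = \C(\Gammahat) \times \C(\Gammahat)$ recorded earlier. In a product of additive categories the indecomposables are exactly those of the form $(A,0)$ and $(0,B)$ with $A,B$ indecomposable, so by Proposition~\ref{p:catC}(3) the complete (and irredundant) list of indecomposables in $\C(\Gamma \times \Z_{2h})$ is given by the natural objects $X_{v}$ as $v$ ranges over the vertices of the two copies of $\Gammahat_{cyc}$, i.e. over all $(i,n) \in \Gamma \times \Z_{2h}$. Applying $\Psi$ then already shows that $\{\Psi(X_{(i,n)})\}$ is a complete list of indecomposables in $\D_{G}(\PP^{1}_{q})$, and the only task left is to identify these images.

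The remaining — and main — step is to prove $\Psi(X_{(i,n)}) \cong X_{i}(n)$, equivalently $\Phi(X_{i}(n)) \cong X_{(i,n)}$. I would first note that $X_{i}(n)$ is genuinely indecomposable in $\D_{G}(\PP^{1}_{q})$: by Proposition~\ref{p:hom}(1) its endomorphism ring is $\K$, hence local, so it admits no nontrivial direct-sum decomposition; as $\Phi$ is an equivalence, $\Phi(X_{i}(n))$ is likewise indecomposable. By the uniqueness clause of Proposition~\ref{p:catC}(3) an indecomposable of $\C(\Gammahat)$ is determined by the vertex at which it is based, so it only remains to check that $\Phi(X_{i}(n))$ is the natural indecomposable attached to $(i,n)$. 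Here the concrete description of $\Phi$ enters: unwinding $\Phi(X_{i}(n)) = \bigoplus \RHom(X_{j}, (\OO_{q}(n)\otimes X_{i})_{k})$, the multiplicity datum places the generator of $X_{i}$ at the vertex $(i,n)$ in homological degree $0$, and the maps $m_{i,k}$ induced by the action of $V_{1}$ reproduce the fundamental relation cutting out $X_{(i,n)}$. That these Hom spaces agree with the path-algebra description $\Path_{\Gammahat}/J$ of Proposition~\ref{p:catC}(4) is essentially the content of Proposition~\ref{p:hom}(3).

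I expect the genuine obstacle to be precisely this identification of $\Phi(X_{i}(n))$ with the natural object $X_{(i,n)}$, since $\Phi(X_{i}(n))$ is not concentrated at the single vertex $(i,n)$ but spreads over all vertices on which $\OO_{q}(n)$ is supported; one must verify that this spread-out representation is exactly the indecomposable complex singled out in \cite{kirillovthind2}, rather than merely \emph{some} indecomposable based at $(i,n)$. Granting this, the corollary follows at once from Theorem~\ref{t:equiv} together with Proposition~\ref{p:catC}(3).
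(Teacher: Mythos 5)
Your proposal is correct and takes essentially the same route as the paper: transport the classification through the equivalence $\Phi$ of Theorem~\ref{t:equiv}, invoke Proposition~\ref{p:catC}(3) on the combinatorial side, and reduce everything to the identification $\Phi(X_{i}(n)) \cong X_{(i,n)}$. The step you flag as the genuine obstacle is exactly where the paper puts its (brief) work --- it settles the identification by citing Lemma 7.1 of \cite{kirillovthind2} and inspecting homogeneous degrees $0,1$, which is the same unwinding of multiplicity spaces and the fundamental relation that you sketch.
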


\begin{proof}
The category $\C(\Gammahat)$ has indecomposables $X_{(i,n)}$ for $(i,n) \in \Gammahat$, and the functor $\Phi$ takes $X_{i} (n)$ to $X_{(i,n)}$. This can be seen by using Lemma 7.1 in \cite{kirillovthind2} and looking at homogeneous degrees $0,1$.
\end{proof}

Since $\C(\Gamma \times \Z_{2h}) = \C(\Gammahat) \times \C(\Gammahat)$, we define $\D_{\overline{G}} (\PP^{1}_{q}) = \Psi^{-1} ( \C(\Gammahat), 0)$, the full triangulated category generated by inverse images of objects of the form $(Y, 0) \in \C(\Gamma \times \Z_{2h})$.

\begin{cor}\label{c:indec} \
\begin{enumerate}
\item The objects $X_{i} (n)$ with $(i,n) \in \Gammahat_{cyc}$ form a complete list of indecomposable objects in $\D_{\overline{G}} (\PP^{1}_{q})$.
\item The equivalence $\Phi$ identifies the functor $X \mapsto X(2)$ with $\tau$.
\item The equivalence $\Phi$ takes the triangle $$X_{i} (n) \to  \bigoplus_{i-j \text{ in } \Gamma} X_{j} (n+1) \to X_{i}(n+2) \to X_{i} (n) [1]$$ 
from Equation~\ref{e:qfundrel} to the triangle $$X_{(i,n)} \to \bigoplus_{i-j \text{ in } \Gamma} X_{(j,n+1)} \to X_{(i,n+2)} \to X_{(i,n)} [1].$$ 
\item The map $\Ind \to \Gammahat_{cyc}$ given by $X_{i} (n) \mapsto (i,n)$, identifies $\Gammahat_{cyc}$ as the Auslander-Reiten quiver of $\D_{\overline{G}} (\PP^{1}_{q})$.
\end{enumerate}
\end{cor}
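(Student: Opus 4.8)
The plan is to deduce every assertion by transporting the corresponding statement of Proposition~\ref{p:catC} across the equivalence $\Phi$. The crucial preliminary observation is that, by the very definition $\D_{\overline{G}}(\PP^{1}_{q}) = \Psi^{-1}(\C(\Gammahat), 0)$ together with Theorem~\ref{t:equiv} and the product decomposition $\C(\Gamma \times \Z_{2h}) = \C(\Gammahat) \times \C(\Gammahat)$, the functor $\Phi$ restricts to an equivalence of triangulated categories $\D_{\overline{G}}(\PP^{1}_{q}) \to \C(\Gammahat)$ onto the first factor. The second input I would use throughout is the identity $\Phi(X_{i}(n)) = X_{(i,n)}$ established in the Corollary above. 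Once these two facts are in hand, the four parts become bookkeeping.

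For part (1), I would use that an equivalence carries indecomposables to indecomposables and a complete list to a complete list. By Proposition~\ref{p:catC}(3) the indecomposables of $\C(\Gammahat)$ are exactly the $X_{v}$ with $v \in \Gammahat_{cyc}$, and $\Phi(X_{i}(n)) = X_{(i,n)}$. The only point needing care is the parity bookkeeping: $X_{(i,n)}$ lies in the first factor $\C(\Gammahat) = \C(\Gammahat^{0}_{cyc})$ precisely when $p(i) + n \equiv 0 \pmod 2$, i.e. exactly when $(i,n) \in \Gammahat_{cyc}$; hence the $X_{i}(n)$ lying in $\D_{\overline{G}}(\PP^{1}_{q})$ are precisely those with $(i,n) \in \Gammahat_{cyc}$, and these give the full list. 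Part (4) then follows in the same spirit: an equivalence induces an isomorphism of Auslander--Reiten quivers, so combining Proposition~\ref{p:catC}(5) (which identifies $\Gammahat_{cyc}$ as the AR quiver of $\C(\Gammahat)$ via $[X_{v}]\mapsto v$) with $\Phi(X_{i}(n)) = X_{(i,n)}$ gives that $X_{i}(n) \mapsto (i,n)$ realizes $\Gammahat_{cyc}$ as the AR quiver of $\D_{\overline{G}}(\PP^{1}_{q})$.

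For part (2) I would verify the natural isomorphism $\Phi \circ (2) \cong \tau \circ \Phi$ by checking it on the generating indecomposables, where it reduces to a comparison of labels. On the sheaf side $X_{i}(n)(2) = X_{i}(n+2)$, so $\Phi(X_{i}(n)(2)) = X_{(i,n+2)}$; on the quiver side $\tau X_{(i,n)} = X_{\tau_{\Gammahat}(i,n)} = X_{(i,n+2)}$, using $\tau_{\Gammahat}(i,n) = (i,n+2)$ and Proposition~\ref{p:catC}(5). The two agree, which also fixes the direction of the identification (the twist $(2)$, not $(-2)$, matches $\tau$). Since both $(2)$ and $\tau$ are exact autoequivalences and $\Phi$ is an equivalence, agreement on indecomposables together with compatibility on the generating morphisms upgrades to a natural isomorphism of functors. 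For part (3), exactness of $\Phi$ sends the twist triangle of Equation~\ref{e:qfundrel} to a triangle whose terms are, by $\Phi(X_{i}(n)) = X_{(i,n)}$, exactly $X_{(i,n)} \to \bigoplus_{i-j} X_{(j,n+1)} \to X_{(i,n+2)} \to X_{(i,n)}[1]$.

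The one step I expect to require genuine care --- the \emph{main obstacle} --- is the end of part (3): identifying the image triangle not merely up to its objects but as the canonical mesh/AR triangle of Proposition~\ref{p:catC}(6). I would handle this by tracing the definition of $\Phi$: the maps it produces along the edges are exactly the components $m_{i,k}: M_{i,k} \to \bigoplus_{j-i} M_{j,k+1}$ induced by $M \to M(1)\otimes V_{1}$, so the first morphism of the image triangle is precisely the structure map appearing in the triangle of Proposition~\ref{p:catC}(6) with $v = (i,n)$, and the remaining morphisms are then determined. The parity and translation-direction bookkeeping in parts (1) and (2) is the other place where it is easy to slip a sign, so I would double-check those conventions against the definitions of $\Gammahat_{cyc}$ and $\tau_{\Gammahat}$ in Section~\ref{s:mckay}.
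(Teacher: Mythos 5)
Your proposal is correct and takes essentially the same approach as the paper: the paper offers no written proof for this corollary, treating it as an immediate consequence of Theorem~\ref{t:equiv}, the identification $\Phi(X_{i}(n)) = X_{(i,n)}$ from the preceding corollary, and Proposition~\ref{p:catC}, which is exactly the transport-across-$\Phi$ argument you carry out. Your additional details (the parity check that $X_{(i,n)}$ lands in the first factor $\C(\Gammahat)$ exactly when $p(i)+n \equiv 0 \bmod 2$, the verification $\Phi(X_{i}(n)(2)) = X_{(i,n+2)} = \tau X_{(i,n)}$, and the observation that $\Phi$'s edge maps are the structure maps $m_{i,k}$, so the image triangle is the mesh triangle) are precisely the bookkeeping the paper leaves implicit.
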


\section{Equivalences of Categories}\label{s:tilting}

In this section we construct equivalences $\D_{\overline{G}} (\PP^{1}_{q}) \to \D^{b} (\Gamma , \Om_{h}^{op})/T^{2}$, for every height function $\hh$. This allows us to prove the analogue of Theorem~\ref{t:Gsheaves} for $\PP^{1}_{q}$.

Let $\hh$ be a height function. Define a ``restriction" functor $\rho_{\hh} : \D_{\overline{G}} (\PP^{1}_{q}) \to \D^{b} (\Gamma , \Om_{h})/T^{2}$, by 
$$\rho_{\hh} (M) = \bigoplus_{i \in \Gamma} M_{(i, \hh (i) )},$$
where the maps along edges come from $m_{(i, \hh (i))} : M_{(i, \hh (i))} \to \bigoplus_{j -i} M_{(j, \hh (j))}$. Note that by definition, $\rho$ is exact.

\begin{thm}\label{t:tilting}
The functor $\rho_{\hh}$ is an equivalence of categories.
\end{thm}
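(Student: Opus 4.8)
The plan is to establish the equivalence $\rho_{\hh}: \D_{\overline{G}} (\PP^{1}_{q}) \to \D^{b} (\Gamma , \Om_{\hh})/T^{2}$ by exhibiting a quasi-inverse, or equivalently by checking that $\rho_{\hh}$ is fully faithful and essentially surjective. The most transparent route is to factor through the combinatorial model: by Theorem~\ref{t:equiv} we already have an equivalence $\Phi: \D_{\overline{G}}(\PP^1_q) \to \C(\Gammahat)$ (restricting the $\Gamma \times \Z_{2h}$ statement to one factor), and $\rho_{\hh}$ visibly factors as the composite of $\Phi$ with a ``slice restriction'' functor $r_{\hh}: \C(\Gammahat) \to \D^b(\Gamma, \Om_{\hh})/T^2$ given by reading off the complexes attached to the vertices $(i,\hh(i))$ of the slice $\Gamma_{\hh}$. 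So the real content is that $r_{\hh}$ is an equivalence, after which $\rho_{\hh} = r_{\hh} \circ \Phi$ is a composite of equivalences.

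\textbf{First} I would treat the case where $\hh$ is \emph{bipartite} (so $\Om_{\hh}$ has only sources and sinks), since there the slice $\Gamma_{\hh}$ sits inside $\Gammahat_{cyc}$ as a genuine full subquiver with no relations interfering, and the fundamental relation of Proposition~\ref{p:catC}(6) lets one reconstruct the entire $\Z_{2}$-graded complex of representations of $\Gammahat_{cyc}$ from its restriction to the slice. Concretely, the object $X_{(i,n)}$ of $\C(\Gammahat)$ restricts on the slice to the indecomposable projective/injective representation of $(\Gamma, \Om_{\hh})$ supported appropriately, and the $\Hom$ computation in Proposition~\ref{p:hom}(4) shows these slice objects have vanishing higher $\Ext$ among themselves within a slice — so they assemble into a tilting object. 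I would verify that $r_{\hh}$ sends this collection of indecomposables bijectively to the indecomposable representations of $(\Gamma, \Om_{\hh})$ and that it matches $\Hom$-spaces, using the mesh-ideal description $\Hom(X_v, X_{v'}) = \Path_{\Gammahat}(v,v')/J$ from Proposition~\ref{p:catC}(4) against the path-algebra description of $\D^b(\Gamma, \Om_{\hh})$.

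\textbf{Next}, to pass from a bipartite height function to an arbitrary one, I would induct on the number of reflections needed to connect $\hh$ to a bipartite height function. Each elementary step replaces $\hh$ by $s_i^{\pm}\hh$ at a source or sink $i$, reversing the arrows at $i$; on the representation-theoretic side this is precisely a BGP reflection functor, and the commuting-triangle formalism of Theorem~\ref{t:Gsheaves}(5) (now transported into the $T^2$-periodic quantum setting, where the translation $\tau$ of Proposition~\ref{p:catC} plays the role the Coxeter/Serre functor played classically) shows that $\rho_{s_i^{\pm}\hh}$ differs from $\rho_{\hh}$ by composition with a derived reflection functor, which is an equivalence. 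Since any two height functions are connected by a chain of such reflections (the reflection functors act transitively on orientations of $\Gamma$), and $\rho_{\hh}$ is an equivalence for one bipartite $\hh$, it is an equivalence for all $\hh$.

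\textbf{The main obstacle} I anticipate is the reflection-functor step: in the $2$-periodic quotient $\D^b(\Gamma,\Om)/T^2$ the usual BGP functors must be upgraded to \emph{derived} reflection functors and checked to descend to the quotient compatibly with $\tau$, and one must confirm the analogue of Theorem~\ref{t:Gsheaves}(5) really holds here rather than merely citing the classical statement. A secondary technical point is the careful bookkeeping of homological degrees in the slice restriction — ensuring that the $\Z_2$-grading coming from $\OO_q = S_q \oplus (S_q(h)\otimes V_{h-2})[1]$ interacts correctly with $r_{\hh}$ so that $r_{\hh}$ lands in $\D^b(\Gamma,\Om_{\hh})/T^2$ and not in some larger category. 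Everything else — full faithfulness on the tilting generators, essential surjectivity via indecomposables, identification of Grothendieck groups — should follow routinely from Theorem~\ref{t:equiv} and Proposition~\ref{p:catC} once these two points are secured.
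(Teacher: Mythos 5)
Your proposal is correct in outline, but it takes a genuinely different route from the paper's proof. The paper argues uniformly in $\hh$, with no case analysis, no tilting theory, and no reflection functors: its key ingredient is Lemma~\ref{l:determine}, which says that any object of $\D_{\overline{G}}(\PP^{1}_{q})$, and any morphism between two such objects, is determined by its restriction to the slice $\Gamma_{\hh}$, since the defining triangle $M \to M(1)\otimes V_{1} \to M(2) \to M[1]$ reconstructs all homogeneous components by iterated cones starting at sources of $\Om_{\hh}$; essential surjectivity then follows by running the same cone construction to extend an arbitrary $Y \in \D^{b}(\Gamma,\Om_{\hh})/T^{2}$ from the slice to an object of $\D_{\overline{G}}(\PP^{1}_{q})$. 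Your factorization $\rho_{\hh} = r_{\hh} \circ \Phi$ agrees with the paper's implicit point of view (the components $M_{(i,\hh(i))}$ in the definition of $\rho_{\hh}$ are exactly the multiplicity spaces produced by $\Phi$), but your treatment of $r_{\hh}$ --- bipartite case by a tilting-type argument, general case by BGP reflections --- is precisely the alternative that the authors mention and decline in the remark following Corollary~\ref{c:main}. What your route buys: it avoids the delicate step in Lemma~\ref{l:determine}(3), where a morphism is extended through cone constructions (cones are not functorial, so the asserted uniqueness of the extension deserves more justification than the paper gives). What it costs: first, tilting theory in the $2$-periodic orbit category $\D^{b}(\Gamma,\Om_{\hh})/T^{2}$ is not off-the-shelf, so your bipartite step should be carried out concretely, e.g.\ by matching indecomposables and $\Hom$-spaces through the mesh description of Proposition~\ref{p:catC}(4) and the identification of $\Gammahat_{cyc}$ with the Auslander--Reiten quiver of $\D^{b}(\Gamma,\Om_{\hh})/T^{2}$, using the Krull--Schmidt property; second, the reflection-functor compatibility you invoke is Corollary~\ref{c:main}(5), which the paper establishes only \emph{after} this theorem, so you must observe that its proof (via Theorem 9.6 of \cite{kirillovthind2}) does not presuppose that $\rho_{\hh}$ is an equivalence --- it does not, since the commutativity of the diagram of functors is independent of that fact, but this needs to be said explicitly to rule out circularity. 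Finally, you correctly defuse the one genuine circularity trap: Proposition~\ref{p:hom}(4) for general slices is itself proved in the paper using the bipartite case of Theorem~\ref{t:tilting}, so starting from a bipartite $\hh$, where Proposition~\ref{p:hom}(4) has a direct degree-counting proof, is exactly what keeps your argument non-circular, mirroring the parenthetical note in the paper's proof of that proposition.
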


To prove this Theorem, we first need the following Lemma.

\begin{lemma}\label{l:determine}
\ 
\begin{enumerate}
\item Any object $M \in \D_{\overline{G}} (\PP^{1}_{q})$ is determined (up to isomorphism) in homogeneous degrees $0,1$.
\item Any object $M \in \D_{\overline{G}} (\PP^{1}_{q})$ is determined (up to isomorphism) by the complexes $M^{\bullet}_{(i, \hh (i))}$ for any height function $\hh$.
\item Any morphism $f : M \to N$ is determined by its values on $M^{\bullet}_{(i, \hh (i))}$ for any height function $\hh$.
\end{enumerate}
\end{lemma}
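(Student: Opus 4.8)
The plan is to deduce all three statements from two inputs already in hand: the fundamental triangle of Equation~\ref{e:qfundrel}, which moves information between neighbouring homogeneous layers, and Lemma~\ref{l:hom}, which rigidifies morphisms of the basic objects $X_{i}(n)$. First I note that Part~(1) is the instance of Part~(2) attached to the bipartite height function $\hh_{0}$ with $\hh_{0}(i)\in\{0,1\}$: since an object of $\D_{\overline{G}}(\PP^{1}_{q})$ is supported on $\Gammahat_{cyc}$, its homogeneous degrees $0$ and $1$ record exactly the complexes $M_{(i,\hh_{0}(i))}$. So it is enough to treat Parts~(2) and (3).

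For Part~(2) I would show that every slice carries the same information, passing between slices one reflection at a time. Any two height functions are joined by a chain of source/sink reflections $s_{i}^{\pm}$ (Section~\ref{s:mckay}), so it suffices to compare $\hh$ with $\hh'=s_{i}^{+}\hh$, which differ only at the vertex $i$. When $i$ is a source of $\Om_{\hh}$ every neighbour $j$ satisfies $\hh(j)=\hh(i)+1$, so in the triangle $M_{(i,\hh(i))}\to\bigoplus_{j-i}M_{(j,\hh(i)+1)}\to M_{(i,\hh(i)+2)}\to M_{(i,\hh(i))}[1]$ the first two terms already belong to the $\hh$-slice while the third is $M_{(i,\hh'(i))}$, the one vertex where the slices differ. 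Thus $M_{(i,\hh'(i))}$ is a cone of a map assembled from the $\hh$-slice, and, rotating the triangle, the $\hh$-value is recovered from the $\hh'$-slice. Iterating outward from $\hh_{0}$ reconstructs $M$ in every homogeneous degree from its degrees $0,1$.

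The main obstacle is upgrading this layer-by-layer reconstruction to a genuine isomorphism of objects: cones are unique only up to non-canonical isomorphism, and since the cyclic $\Z_{2h}$-grading wraps around, a hand-built comparison risks a monodromy mismatch when it returns to the starting slice. I would avoid building the isomorphism directly and instead invoke Krull--Schmidt: by Corollary~\ref{c:indec} and the Krull--Schmidt property every object is a finite sum $M\simeq\bigoplus_{a}X_{i_{a}}(n_{a})$, so its isomorphism class is just the multiplicity function on $\Gammahat_{cyc}$. It then suffices to recover these multiplicities from the slice, and here the $\Hom$/$\Ext$ pattern of Proposition~\ref{p:hom} does the work: on a slice the $X_{i}(n)$ have vanishing higher $\Ext$ and the prescribed $\Hom$'s, so their restrictions $\rho_{\hh}(X_{i}(n))$ are pairwise distinguishable (they realise the indecomposable representations of $(\Gamma,\Om_{\hh})$), and equal slice data forces equal multiplicities, hence $M\simeq N$.

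For Part~(3) I would again decompose $M$ and $N$ and view $f$ as a matrix of components $f_{ba}\colon X_{i_{a}}(n_{a})\to X_{j_{b}}(m_{b})$. By Lemma~\ref{l:hom} each $f_{ba}$ is determined by a single homogeneous component, namely its value on the cyclic generator of $X_{i_{a}}(n_{a})$, and the $\OO_{q}$-action then spreads that value across all layers via the identity $f_{k}=m(\mathrm{id}\otimes f_{0})$ from the proof of that lemma. The delicate point, as before, is matching this single determining degree to the slice; this is exactly where the $\Ext$-vanishing of Proposition~\ref{p:hom} is needed, since it guarantees that a morphism between the slice restrictions is an honest morphism of $(\Gamma,\Om_{\hh})$-representations and hence lifts back uniquely through the propagation. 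Concretely I would prove the contrapositive: if $f$ vanishes on the slice then every determining component vanishes, so $f=0$; applying this to the difference of two morphisms agreeing on the slice gives the statement.
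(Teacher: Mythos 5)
Your opening moves match the paper: Part (1) is indeed the bipartite instance of Part (2), and your reflection-by-reflection reconstruction of the missing layers via the triangle of Equation~\ref{e:qfundrel} (recovering $M_{(i,\hh(i)+2)}$ as a cone at a source of $\Om_{\hh}$) is exactly the paper's argument; your monodromy worry about the cyclic $\Z_{2h}$-grading is a fair criticism of how tersely the paper treats this. But the Krull--Schmidt repair you substitute has a genuine gap. It requires that the restrictions $\rho_{\hh}(X_{i}(n))$, for \emph{all} $(i,n)\in\Gammahat_{cyc}$, be pairwise non-isomorphic indecomposables of $\D^{b}(\Gamma,\Om_{\hh})/T^{2}$, so that the multiplicities in $M\simeq\bigoplus_{a}X_{i_{a}}(n_{a})$ can be read off from the slice data. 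Proposition~\ref{p:hom} cannot deliver this: it computes $\Hom$ and $\Ext$ only among objects $X_{i}(n)$ with $(i,n)$ \emph{on} the slice $\Gamma_{\hh}$, whereas all but $|\Gamma|$ of the $|\Gamma|\cdot h$ indecomposables lie off the slice, and nothing available at this point in the paper describes $\rho_{\hh}(X_{j}(m))$ for such $(j,m)$. Your claim that these restrictions ``realise the indecomposable representations of $(\Gamma,\Om_{\hh})$'' is essentially the injectivity-on-objects content of Theorem~\ref{t:tilting} --- the very theorem this lemma exists to prove --- so as written you have assumed the conclusion. (There is a second, repairable, circularity: Proposition~\ref{p:hom}(4) for a non-bipartite slice is itself proved in the paper by invoking Theorem~\ref{t:tilting} in the bipartite case, hence this lemma; at minimum your argument would have to be run bipartite-first.)

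Part (3) has the same gap in different clothing. Reducing to ``$f$ vanishes on the slice implies $f=0$'' is fine, and Lemma~\ref{l:hom} does pin each matrix entry $f_{ba}\colon X_{i_{a}}(n_{a})\to X_{j_{b}}(m_{b})$ to a single component in homogeneous degree $2h-n_{a}$; but that degree is generally not on the slice, and individual components of a nonzero $f_{ba}$ genuinely can vanish (the multiplication $V_{k}\otimes V_{m_{b}-n_{a}}\to V_{k+m_{b}-n_{a}}$ is zero whenever the index lands where the $V$'s vanish), so vanishing of the slice components does not formally propagate to the determining component. Your phrase ``lifts back uniquely through the propagation'' names the uniqueness to be proven rather than proving it. The paper's mechanism is concrete: given $f$ on the slice and a source $i$ of $\Om_{\hh}$, the isomorphisms $M_{(i,\hh(i)+2)}\simeq\mathrm{Cone}(m_{(i,\hh(i))})$ and $N_{(i,\hh(i)+2)}\simeq\mathrm{Cone}(n_{(i,\hh(i))})$ admit a unique fill-in map extending $f$, and iterating over sources propagates $f$ uniquely around $\Gammahat_{cyc}$; all three parts of the lemma come from this one construction. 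If you want to keep the Krull--Schmidt strategy instead, you must first actually compute $\rho_{\hh}(X_{i}(n))$ for every vertex of $\Gammahat_{cyc}$ --- for instance by transporting the problem through the equivalence $\Phi$ of Theorem~\ref{t:equiv} to $\C(\Gammahat)$ and using the explicit indecomposables of \cite{kirillovthind2} --- since without that computation the multiplicity-recovery step has no support.
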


\begin{proof} \
\begin{enumerate}
\item Since any $M \in \D_{\overline{G}} ( \PP^{1}_{q} )$ satisfies the condition that $M \to M(1) \otimes V_{1} \to M(2) \to M[1]$ is a triangle, we see that if we know $M_{0} , M_{1}$, and the map $M_{0} \to M_{1} \otimes V_{1}$, we have that $M_{2} \simeq \text{Cone} (M_{0} \to M_{1} \otimes V_{1})$. Inductively, we can recover $M_{k}$ for all $k$.
\item The proof is the same as for (1). (Note that (1) is just the special case that $\hh$ is the bipartite height function $\hh (i) = p(i)$.)
\item Recall that any height function $\hh$ gives an orientation $\Om_{\hh}$. Suppose that $f : M_{(i,\hh (i) )} \to N_{(i,\hh (i) )}$ is given for all $i$. Take $i \in \Gamma$ to be a source for $\Om_{\hh}$. Then using the isomorphisms $M_{i, \hh (i) + 2} \stackrel{\sim}{\to} Cone (m_{(i, \hh(i))}) $ and $N_{i, \hh (i) + 2} \stackrel{\sim}{\to} Cone (n_{(i, \hh(i))})$, there is a unique map making the diagram below commutative, which extends $f$ to $f_{(i, \hh (i) + 2 )} : M_{(i, \hh (i) +2)} \to N_{(i, \hh (i) +2)}  $.
$$
\xymatrix{
M_{(i, \hh(i) + 2)} \ar[d] \ar[r]^{f_{(i, \hh (i) + 2 )} } & M_{(i, \hh(i) + 2)}  \ar[d] \\
Cone (m_{(i, \hh(i))}) \ \ \ \ar[r]^{Cone( f_{(i, \hh (i) ) } ) } & \ \ \ Cone(n_{(i, \hh(i))}) \\
}$$
Continuing in this way it is possible to extend $f$ to all $(i, n) \in \Gammahat_{cyc}$, and hence for all $M_{i,k}$. 
\end{enumerate}
\end{proof}

We now complete the proof of Theorem~\ref{t:tilting}.
\begin{proof}
By Lemma~\ref{l:determine}, any object $M$ and any morphism $f: M \to N$ in $\D_{\overline{G}} (\PP^{1}_{q})$ is determined (up to isomorphism) by specifying them for a height function. Hence, given $Y \in  \D^{b} (\Gamma , \Om_{h})/T^{2}$, we can define an object $M_{Y}$ (unique up to isomorphism), which maps to $Y$ under $\rho_{\hh}$. Moreover, $\Hom_{\D_{\overline{G}} (\PP^{1}_{q})} (M, N) \simeq \Hom_{\D^{b} (\Gamma , \Om_{\hh}) /T^{2}} (R \rho_{\hh} M , R \rho_{\hh} N)$. Hence $\rho$ is an equivalence.
\end{proof}

The following Corollary summarizes the properties of $\D_{\overline{G}} (\PP^{1}_{	q})$ which generalize the properties of $\D_{\overline{G}} (\PP^{1})$ from Theorem~\ref{t:Gsheaves}.

\begin{cor}\label{c:main} \
\begin{enumerate}
\item Let $\K$ denote the Grothendieck group of $\D_{\overline{G}} (\PP^{1}_{	q})$. \\
Set $< \F , \G > = \dim \Hom (\F, \G) - \dim \Ext^{1} (\F , \G)$ and $( \F, \G )_{\PP^{1}_{q}} = < \F , \G > + < \G, \F >$.\\ 
Then $\K$ can be identified with the root lattice of $\Gamma$, the form $( \cdot \ , \cdot )_{\PP^{1}_{q}}$ above can be identified with its bilinear form, and the set $\Ind_{\PP^{1}_{q}} = \{ [X] \in \K \ | \ X \text{ is indecomposable in  } \D_{\overline{G}} (\PP^{1}_{q}) \}$ can be identified with the set of roots.
\item The map on $\K$ induced by the ``twist" $\F \mapsto \F (2)$, gives a canonical Coxeter element for this root system.
\item The classes of indecomposable sheaves are of the form $X_{i} (n) := X_{i} \otimes \OO_{q}(n)$ for $p(i)+n \equiv 0 \mod 2$. They are in bijection with vertices of the quiver $\Gammahat_{cyc}$ so that the arrows correspond to indecomposable morphisms $X_{i} (n) \to X_{j} (m)$. (I.e. $\Gammahat_{cyc}$ is the Auslander-Reiten quiver of $\D_{\overline{G}} (\PP^{1}_{q})$.
\item For any height function $\hh: \Gamma \to \Z_{2h}$, there is an equivalence of triangulated categories 
$\rho_{\hh} : \D_{\overline{G}} (\PP^{1}_{q}) \to \D^{b} (\Gamma, \Om_{\hh}) / T^{2}$
given by ``restriction".
\item If $i \in \Gamma$ is a source (or sink) for $\Om_{\hh}$, then the following diagram commutes:
$$\xymatrix{
&  \D^{b} (\Gamma ,\Om_{\hh}) / T^{2}   \ar[dd]^{S^{\pm}_{i}} \\
\D_{\overline{G}} (\PP^{1}_{q}) \ar[dr]_{\rho_{s^{\pm}_{i} \hh}} \ar[ur]^{\rho_{\hh}} & \\
& \D^{b} (\Gamma ,\Om_{s^{\pm}_{i} \hh}) / T^{2} \\
}$$
where $S^{\pm}_{i}$ is the derived BGP reflection functor.
\item The exact triangle
\begin{equation*}
\OO_{q} \to \OO_{q} \otimes V_{1} \to \OO_{q} (2) \to \OO_{q} [1]
\end{equation*}
gives an exact triangle of indecomposable objects
\begin{equation*}
X_{i} (n) \to \bigoplus_{i-j} X_{j} (n+1) \to X_{i} (n+2) \to X_{i} (n) [1]
\end{equation*}
which induces the full set of relations in $\K$.
\item $\Hom (X_{i} (n) , X_{j} (m)) = \Path_{\Gammahat} ((i,n), (j,m)) / J$ where $J$ is an explicitly described quadratic ideal.
\item $\D_{\overline{G}} (\PP^{1}_{q})$ has Serre Duality: $\Hom (X, Y) \simeq \big{(} \Ext^{1} (Y, X(2)) \big{)}^{*}$
\end{enumerate}

\end{cor}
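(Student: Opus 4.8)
The plan is to deduce essentially all parts of the Corollary by transporting the corresponding statements of Proposition~\ref{p:catC} across the equivalence established above. First I would observe that, by Theorem~\ref{t:equiv}, $\Phi$ is an equivalence $\D_{G}(\PP^{1}_{q}) \to \C(\Gamma \times \Z_{2h}) = \C(\Gammahat) \times \C(\Gammahat)$, and that by the definition of $\D_{\overline{G}}(\PP^{1}_{q})$ as $\Psi^{-1}(\C(\Gammahat), 0)$ this restricts to an equivalence $\Phi : \D_{\overline{G}}(\PP^{1}_{q}) \stackrel{\sim}{\to} \C(\Gammahat)$. Since any equivalence of triangulated categories induces an isomorphism on Grothendieck groups preserving $\Hom$ and $\Ext^{1}$ (hence the forms $\langle \cdot, \cdot \rangle$ and $(\cdot, \cdot)$), sends indecomposables to indecomposables, and carries triangles to triangles, parts (1), (3), (6), and (7) follow directly from parts (1), (3)/(5), (6), and (4) of Proposition~\ref{p:catC} respectively, once we match the objects $X_{i}(n)$ with the $X_{(i,n)}$ via Corollary~\ref{c:indec}(3).

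For part (2) and the identification of the Coxeter element, I would invoke Corollary~\ref{c:indec}(2), which asserts that $\Phi$ intertwines the twist $X \mapsto X(2)$ with the translation $\tau$; combined with Proposition~\ref{p:catC}(2) this shows the induced map on $\K$ is a Coxeter element. Part (4) is precisely the content of Theorem~\ref{t:tilting}, so nothing further is needed there. For Serre duality in part (8), I would additionally use the identification of $X \mapsto X(2)$ with $\tau$ to rewrite Proposition~\ref{p:catC}(7) as $\Hom(X, Y) \simeq \big( \Ext^{1}(Y, X(2)) \big)^{*}$.

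The main obstacle is part (5), the commutativity of the BGP reflection square, which is the only statement not immediately inherited from $\C(\Gammahat)$. Here I would argue directly at the level of slices, in parallel with the classical case of Theorem~\ref{t:Gsheaves}(5). Fix a vertex $i$ that is a source for $\Om_{\hh}$, so that $s_{i}^{+}\hh$ raises the height at $i$ by $2$ and reverses the arrows incident to $i$. The two restriction functors $\rho_{\hh}$ and $\rho_{s_{i}^{+}\hh}$ record the complexes $M^{\bullet}_{(j, \hh(j))}$ for all $j$, differing only in that the entry at $i$ is replaced by $M^{\bullet}_{(i, \hh(i)+2)}$. The fundamental relation satisfied by objects of $\D_{\overline{G}}(\PP^{1}_{q})$ supplies the triangle $M_{(i, \hh(i))} \to \bigoplus_{j-i} M_{(j, \hh(j)+1)} \to M_{(i, \hh(i)+2)} \to M_{(i, \hh(i))}[1]$, exhibiting $M_{(i,\hh(i)+2)}$ as the cone of the map out of the source. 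I would then identify this cone construction with the action of the derived BGP functor $S_{i}^{+}$ on the quiver side: reflecting at a source replaces the space at $i$ by the cone of the incidence map and reverses the incident arrows. Checking that the resulting square commutes up to natural isomorphism then reduces to comparing the two descriptions of $M_{(i,\hh(i)+2)}$, with the sink case $S_{i}^{-}$ handled dually via the cocone.
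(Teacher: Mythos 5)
Your proposal is correct, and for parts (1)--(4) and (6)--(8) it is essentially identical to the paper's proof: transport Proposition~\ref{p:catC} across the equivalence $\Phi$ of Theorem~\ref{t:equiv} (restricted to $\D_{\overline{G}}(\PP^{1}_{q}) \to \C(\Gammahat)$ via Corollary~\ref{c:indec}), quote Theorem~\ref{t:tilting} for part (4), and use the identification of the twist $X \mapsto X(2)$ with $\tau$ for parts (2) and (8). The one place you genuinely diverge is part (5): the paper disposes of it in a single line by citing the proof of Theorem 9.6 in \cite{kirillovthind2}, whereas you give a direct, self-contained argument inside $\D_{\overline{G}}(\PP^{1}_{q})$, observing that the fundamental relation exhibits $M_{(i,\hh(i)+2)}$ as the cone of the incidence map out of the source $(i,\hh(i))$, which is exactly what the derived BGP functor $S_{i}^{+}$ produces at vertex $i$. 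Your route buys transparency --- it shows the mechanism (the triangle of Equation~\ref{e:fundrel} \emph{is} the reflection) rather than deferring to the combinatorial category of the companion paper --- at the cost of one detail you should make explicit: cones in a triangulated category are not functorial, so the isomorphism $S_{i}^{\pm}\rho_{\hh}(M) \simeq \rho_{s_{i}^{\pm}\hh}(M)$ must be shown to be natural in $M$. In this setting that is not a real obstruction, because $M_{(i,\hh(i)+2)}$ is not \emph{defined} as a cone but comes as part of the object $M$ together with a canonical triangle, and Lemma~\ref{l:determine}(3) (uniqueness of the extension of a morphism from a slice to the neighboring vertex) supplies exactly the compatibility with morphisms needed to upgrade the objectwise identification to a natural isomorphism of functors; you should invoke it to close that gap.
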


\begin{remark}
One may define an object 
$$\T_{\hh}= \bigoplus_{(i,n) \in \Gamma_{\hh}} X_{i} (n)$$
for every height function $\hh$. There is an equivalence $\Phi_{\hh} :  \D_{\overline{G}} (\PP^{1}_{q}) \to \D^{b} (R)/T^{2}$, where $R=\End (\T_{\hh})$ and $\Phi_{\hh} (M) = \RHom (T,M)$. Moreover, $\End (\T_{\hh})$ can naturally be identified with the path algebra of the quiver $(\Gamma, \Om_{\hh}^{op})$, so we can view this equivalence as having values in $\D^{b} (\Gamma, \Om_{\hh}^{op} ) /T^{2}$. This can be thought of as a ``tilting" object, which would give an equivalence similar to that from Theorem~\ref{t:Gsheaves}. However, for simplicity, we chose to use a more direct equivalence.  
\end{remark}

\begin{proof} \
\begin{enumerate}
\item The equivalence $\Phi$, of Theorem~\ref{t:equiv}, gives an isomorphism $\K \simeq \K ( \C(\Gammahat) )$ and a bijection $\Ind_{\PP^{1}_{q}} \to \Ind_{\C(\Gammahat)}$. $\Phi$ also identifies $\Hom$ and $\Ext$ spaces. Hence the form $( \cdot , \cdot )_{\PP^{1}_{q}}$ is identified with $( \cdot , \cdot )_{\C(\Gammahat)}$. The result then follows from Proposition~\ref{p:catC}.

\item By Corollary~\ref{c:indec}, $\Phi$ identifies $X \mapsto X(2)$ with $\tau$. Proposition~\ref{p:catC} identifies $\tau$ as a Coxeter element for the corresponding root system, hence the map on $\K$ coming from $X \mapsto X(2)$ is a Coxeter element for the corresponding root system.

\item This is just Parts (1) and (3) of Corollary~\ref{c:indec}.
\item This is Theorem~\ref{t:tilting}.
\item This follows the proof of Theorem 9.6 in \cite{kirillovthind2}.
\item By Corollary~\ref{c:indec} the equivalence $\Phi$ takes the triangle
 $$X_{i} (n) \to  \bigoplus_{i-j \text{ in } \Gamma} X_{j} (n+1) \to X_{i}(n+2) \to X_{i} (n) [1]$$ 
to the triangle $$X_{(i,n)} \to \bigoplus_{i-j \text{ in } \Gamma} X_{(j,n+1)} \to X_{(i,n+2)} \to X_{(i,n)} [1].$$ By Proposition~\ref{p:catC}, such triangles induce a full set of relations in the Grothendieck group.
\item Again, $\Phi$ is an equivalence, and identifies $X_{i} (n) \in \D_{\overline{G}} (\PP^{1}_{q})$ with $X_{(i,n)} \in \C(\Gammahat)$. By Proposition~\ref{p:catC} we have $$\Hom_{\PP^{1}_{q}} (X_{i} (n) , X_{j}(m)) \simeq \Hom_{\C(\Gammahat)} (X_{(i,n)} , X_{(j,m)} ) = \Path_{\Gammahat} ((i,n) , (j,m)) / J$$ where $J$ is the mesh ideal.
\item The category $\C(\Gammahat)$ has Serre Duality given by  $\Hom (X,Y) \simeq \big{(} \Ext^{1} (Y, \tau X) \big{)}^{*}$. The equivalence $\Phi$ identifies $X \mapsto X(2)$ with $\tau$, so $\D_{\overline{G}} (\PP^{1}_{q})$ has Serre Duality, and it is given by $\Hom (X, Y) \simeq \big{(} \Ext^{1} (Y, X(2) )\big{)}^{*}$. 
\end{enumerate}
\end{proof}

\bibliographystyle{amsalpha}

\end{document}